\newcommand{\al}{\alpha}
\newcommand{\be}{\beta}
\newcommand{\ga}{\gamma}
\newcommand{\de}{\delta}
\newcommand{\f}{\phi}
\newcommand{\ff}{\Phi}
\newcommand{\PP}{\mathbb{P}} 
\newcommand{\A}{\mathbb{A}}
\newcommand{\G}{\mathbb{G}}
\newcommand{\B}{\mathcal{B}}
\newcommand{\T}{\mathcal{T}}
\renewcommand{\H}{\mathcal{H}}
\newcommand{\R}{\mathcal{R}}
\renewcommand{\O}{\mathcal{O}}
\newcommand{\p}{\mathfrak{p}}
\newcommand{\C}{\mathfrak{c}}
\newcommand{\ol}{\overline}    
\newcommand{\og}{\ol g}
\newcommand{\ox}{\ol x}
\newcommand{\oy}{\ol y}
\newcommand{\oK}{\ol K}
\newcommand{\op}{\ol{\mathfrak p}}
\newcommand{\oj}{\ol{\jmath}}
\newcommand{\vF}{\check F}
\newcommand{\vx}{\check x}
\newcommand{\vy}{\check y}
\newcommand{\va}{\check a}
\newcommand{\vb}{\check b}
\newcommand{\vT}{T'}
\newcommand{\vB}{B'}
\newcommand{\vf}{\f'}
\newcommand{\uW}{\breve W}
\newcommand{\uu}{\breve u}
\newcommand{\uv}{\breve v}
\newcommand{\ux}{\breve x}
\newcommand{\uy}{\breve y}
\DeclareMathOperator {\rank}{rank}
\def\Spec{\operatorname{Spec}}
\def\Div{\operatorname{div}}
\def\ord{\operatorname{ord}}
\newtheorem{thm}{Theorem}[section]
\newtheorem{cor}[thm]{Corollary}
\newtheorem{prop}[thm]{Proposition}
\begin{document}
\title[Fibrations by non-smooth curves of arithmetic genus two]
{Fibrations by non-smooth projective curves of\\
arithmetic genus two in characteristic two}

\author{Alejandro Simarra Ca\~nate}
\address{Universidade Federal Fluminense -- Instituto de Matem\'atica,
Rua Mario Santos Braga s/n, 24020-140 -- Niter\'oi --RJ, Brazil}

\email{alsica@impa.br}


\author{Karl-Otto St\"ohr}
\address{Instituto de Matem\'atica Pura e Aplicada (IMPA),
Estrada Dona Castorina 110, 22460-320 -- Rio de Janeiro -- RJ, Brazil}
\email{stohr@impa.br}


\subjclass[2010]{Primary 14D06, 14E05, 14H10, 14H20, 14H52}

\keywords{Bertini--Sard theorem, fibrations by non-smooth curves, generic fibre,
geometrical generic fibre, elliptic curves,
non-conservative function fields}

\date{ \today }

\begin{abstract}
Looking in positive characteristic for failures of the Bertini--Sard
theorem, we determine, up to birational equivalence, the separable
proper morphisms of smooth algebraic varieties in characteristic two,
whose fibres are non-smooth curves of arithmetic genus two.

\end{abstract}

\maketitle

\section*{Introduction}
\label{I}
\noindent
Bertini's theorem on moving singularities, published in the last but one
decade of the nineteens century, has become a fundamental tool in Algebraic
Geometry. Nowadays, due to its similarities to Sard's theorem on differentiable
maps, it is also called the Bertini--Sard theorem. It assures that almost all fibres
of a dominant morphism between smooth algebraic varieties are smooth.

However, in the 1940s Zariski~\cite{Z1} observed that the theorem may fail in positive
characteristic. He had constructed a fibration \,$\f:T\rightarrow B$\,
by algebraic curves that admits moving singularities, though the total space $T$
is smooth. A moving singularity of
\,$\f$\, can be viewed as a horizontal prime divisor on \,$T$\, with the
property that each of its points is a singular point of the fibre to which
it belongs.

Translated in modern language, Zariski argued that, though the generic fibre
is a regular scheme over the base field \,$k(B)$\,, it may not be smooth,
that is, the geometric generic fibre, defined by extending the base field
to its algebraic closure \,$\overline{k(B)}$, may have singularities.
This means that the function
field \,$k(T)|k(B)$\, may be non-conservative, that is, its genus $g$ may
decrease by tensoring with \,$\overline{k(B)}$.
For more explications we refer to Section~\ref{A}.

To rescue Bertini's theorem in positive characteristic \,$p$\,, we are conduced
to classify its exceptions. As follows from a theorem of Tate~\cite{T1},
Bertini's theorem can only fail if \,$p\leq 2\,g+1$\,. Non-conservative
function fields of genus $1$ were classified by Queen~\cite{Q}, and of
genus $2$ in odd characteristic by Borges Neto~\cite{Bo}. General results on
non-conservative function fields and their singular primes were developed
by Stichtenoth, Bedoya and the second author
in the papers \cite{Sti}, \cite{BS} and \cite{St1}.

In their program to extend Enriques' classification of algebraic   
surfaces to arbitrary characteristic, Bombieri and Mumford~\cite{BM}
encountered \textit{quasi-elliptic fibrations}, i.e., fibrations by cuspidal
curves of arithmetic genus $1$\,.
There is a large number of recent papers on classification theory of
algebraic varieties and singularities in positive characteristic, too large
to put in our references, which can be found by starting the search with
\cite{BM} and looking successively for citations and references.
Singularities of generic fibres in positive characteristic
were analyzed by Schr\"oer~\cite{Sc}.
Fibrations by non-smooth curves of arithmetic genus $3$ in characteristic
$3$, $5$ and $7$ were studied by Salom\~ao~\cite{Sa1},\cite{Sa2}
and the second author \cite{St3},\cite{St4}.

\medskip
In the present paper we study in characteristic two the fibrations by
non-smooth curves of arithmetic genus two.
We realize the fibres by tri-canonical embeddings
as curves on a cone in $\PP^4$.
The discussion
naturally divides into two cases. If the function field of the generic fibre
is separable over its canonical quadratic rational subfield, then we prove
that almost every fibre is geometrically elliptic, i.e., its non-singular
model is an elliptic curve (see Theorem~\ref{C1}). In the second case
the fibres are rational, as discussed in Theorem~\ref{D1}.

We discover a $6$-dimensional smooth algebraic variety
\,$Z \subset \PP^4\times\A^5$\, such that almost all fibres of the
projection morphism \,$\pi :Z\rightarrow \A^5$\, are cuspidal
geometrically elliptic curves of arithmetic genus two (see Theorem~\ref{E1}).
We describe how the elliptic modular invariant of the fibres varies,
determine the singular points of all fibres, and discuss how the singularities move.

Theorem~\ref{E2} is the main result
of this paper. It states that each proper separable morphism
between smooth algebraic varieties, whose fibres are geometrically elliptic
curves of arithmetic genus two, is birational equivalent to a base
extension of the fibration \,$\pi :Z\rightarrow \A^5$\,.
A similar result for fibrations by rational curves of arithmetic genus
two is also obtained (see Theorem~\ref{E4}).

\section{Moving singularities of fibrations by algebraic curves}
\label{A}
\noindent
In this introductory section we present prerequisites on moving singularities
of fibrations by algebraic curves, needed to understand our paper.

Let $\f:T \rightarrow B$ be a dominant morphism of irreducible
algebraic varieties defined over an algebraically closed field $k$.
We assume that $\dim T = \dim B + 1$\, or, equivalently, almost all fibres
are algebraic curves (see \cite[p.\ 74]{Sh}). Thus by restricting if
necessary the base variety $B$ to a dense open subvariety we get a fibration
by algebraic curves.

By identifying the rational functions on the base $B$ with rational functions
on the total space $T$ that are constant along each fibre, we can view the
field $k(B)$ of the base as a subfield of $k(T)$. We assume that almost all
fibres are integral. By a theorem of Matsusaka this means that $k(B)$ is
algebraically closed in $k(T)$ and that $k(T)$ is separable over $k(B)$
(see \cite{M}, \cite[pp.\ 256--257]{Sh}). Thus the field $k(T)$ of the total
space, which is a higher dimensional function field over the
constant field $k$, becomes a one-dimensional separable function field over
the base field $ k(B)$. In this sense, the fibrations by integral algebraic
curves over the variety $B$, up to birational equivalence, correspond
bijectively to the isomorphism classes of the one-dimensional separable
function fields over $k(B)$.

In the setting of schemes, the function field $k(T)|k(B)$ is the field of
the \textit{generic fibre} $\T\times_\B\, \Spec k(B)$\,, where the
calligraphic letters $\T$ and $\B$ stand for the integral schemes whose points
correspond bijectively to the closed irreducible subsets of $T$ and $B$,
respectively. The generic fibre is a geometrically integral curve over
$k(B)$. Its closed points, which are exactly its non-generic points,
correspond bijectively to the \textit{horizontal prime divisors} of the
fibration $\f:T\rightarrow B\,$, that is, to the prime divisors of the total
space $T$ whose images are dense in the base variety $B$\,. The local ring of
the scheme $\T$ (and also the local ring of the generic fibre) at a closed
point of the generic fibre is equal to the local ring of the total space
$T$ along the corresponding horizontal prime divisor, and its residue field
is isomorphic to the field of rational functions on the horizontal prime
divisor.

As the \textit{non-smooth locus} of the morphism $\f:T\rightarrow B$ (i.e.,
the union of the non-smooth loci of the fibres of $\f$) is closed in $T$,
and as even the non-smooth locus of the corresponding morphism 
$\ff:\T \rightarrow \B$ of schemes is closed in $\T$
(cf.\ \cite[p.\ 224, Cor.\ 2.12]{L}), we deduce that a closed irreducible
subset $H$ of $T$ is contained in the non-smooth locus of $\f$ if and only
if the corresponding integral scheme $\H$\, or, equivalently, its generic
point is contained in the non-smooth locus of $\ff$\,. Applying this
remark to the horizontal prime divisors, we obtain:

\begin{prop}
\label{A1}
The horizontal prime divisors contained in the non-smooth locus of the
fibration $\f:T\rightarrow B$ correspond bijectively  to the non-smooth
closed points of the generic fibre $\T\times_\B\,\Spec k(B)$.
\end{prop}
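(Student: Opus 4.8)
The plan is to establish a bijection between two sets: horizontal prime divisors in the non-smooth locus of $\f$, and non-smooth closed points of the generic fibre. The key observation is that the text has already built up all the necessary infrastructure, so the proof is essentially a matter of combining two facts that are stated in the preceding paragraph.

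Let me identify what's available. First, the preceding discussion establishes a bijection: closed points of the generic fibre correspond bijectively to horizontal prime divisors of $\f$. Second, the paragraph just before the proposition establishes (citing Liu) that the non-smooth locus of $\ff$ is closed in $\T$, and concludes that a closed irreducible subset $H$ of $T$ lies in the non-smooth locus of $\f$ iff the corresponding integral scheme $\H$ (equivalently its generic point) lies in the non-smooth locus of $\ff$.

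So the proof structure would be: take the already-established bijection between horizontal prime divisors and closed points of the generic fibre, and show it restricts to a bijection between the respective non-smooth sub-collections. A horizontal prime divisor $H$ is a closed irreducible subset of $T$; its generic point corresponds to a closed point $P$ of the generic fibre. By the displayed remark, $H$ lies in the non-smooth locus of $\f$ iff the generic point of $\H$ lies in the non-smooth locus of $\ff$. The remaining step is to observe that the generic point of $\H$, viewed in the scheme $\T$, coincides with (or maps to) the point $P$ of the generic fibre, and that $P$ is non-smooth as a point of the generic fibre iff that point lies in the non-smooth locus of $\ff$. This last equivalence is where the base-change compatibility of the non-smooth locus must be used: the non-smooth locus of $\ff:\T\to\B$ restricts, over the generic point of $\B$, to the non-smooth locus of the generic fibre $\T\times_\B\Spec k(B)\to\Spec k(B)$.

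Since both directions follow formally once the dictionary is in place, the statement is really a corollary, and I would present it as such. The main obstacle — really the only subtle point — will be checking that "non-smooth as a point of the generic fibre" agrees with "non-smooth point of $\ff$ lying over the generic point of $\B$." This requires that smoothness of the morphism $\ff$ at a closed point of the generic fibre be detected by smoothness of the generic fibre over $k(B)$ at that point, which is the standard fact that the fibre of the non-smooth locus over a point of the base is the non-smooth locus of the corresponding fibre. Given the closedness results already cited, the bijection then transports non-smooth points to non-smooth points in both directions, completing the argument.
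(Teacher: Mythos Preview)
Your proposal is correct and follows exactly the paper's approach: the proposition is stated as an immediate consequence of the paragraph preceding it, combining the already-established bijection between horizontal prime divisors and closed points of the generic fibre with the remark that $H$ lies in the non-smooth locus of $\f$ iff the generic point of $\H$ lies in the non-smooth locus of $\ff$. Your explicit mention of the base-change compatibility (that the fibre over $\Spec k(B)$ of the non-smooth locus of $\ff$ equals the non-smooth locus of the generic fibre) is a detail the paper leaves implicit, but the argument is the same.
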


These horizontal prime divisors, whose points are singular points
of the fibres to which they belong, are called the
\textit{moving singularities} of the fibration.
Here we do not consider singularities that move in subvarieties of codimension
larger than $1$\,.

We always assume that the dominant morphism $\f:T\rightarrow B$ is proper. This
implies that its fibres are complete, that $\f$ is surjective and that even
the restrictions of $\f$ to the horizontal prime divisors are surjective.
We further assume that the total space $T$ is smooth. In particular, it is
regular in codimension one. Thus the generic fibre 
is a regular complete geometrically integral algebraic curve over $k(B)$,
or more precisely,
\[
\T\times_\B\,\Spec k(B) = \R_{k(T)|k(B)}
\]
where $\R_{k(T)|k(B)}$ denotes the \textit{regular complete model} of the
one-dimensional function field $k(T)|k(B)$. The closed points of
$\R_{k(T)|k(B)}$ are exactly the primes $\p$ of $k(T)|k(B)$, and their
local rings are the corresponding discrete valuation rings $\O_\p$\,.
By Proposition~\ref{A1} a horizontal prime divisor is a moving singularity
if and only if the corresponding prime $\p$ is a \textit{singular prime}
in the sense that $\p$ is a non-smooth point of $\R_{k(T)|k(B)}$, i.e.,
the one-dimensional semi-local ring $\O_\p\otimes_{k(B)}\,\ol{k(B)}$
is non-regular, i.e., over the point $\p$ of the generic fibre there lies a
singular point of the \textit{geometric generic fibre}
\begin{equation*}
\begin{aligned}
\T\times_\B\,\Spec\ol{k(B)}
&= (\T\times_\B\,\Spec k(B))\times_{\Spec k(B)}\,\Spec\ol{k(B)}
\\ 
& = \R_{k(T)|k(B)}\times_{\Spec k(B)}\,\Spec\ol{k(B)} \,.
\end{aligned}
\end{equation*}
By Rosenlicht's genus drop formula
(see Section~\ref{B})
the number of the
singular primes of $k(T)|k(B)$, counted according to their singularity degrees,
is equal to $g-\og$, where $g$ and $\og$ denote the genera of the function
fields $k(T)|k(B)$ and $k(T)\otimes_{k(B)}\ol{k(B)}\mid\ol{k(B)}\,,$
respectively. 
Thus the fibration $\f:T\rightarrow B$ admits moving
singularities if and only if $\og<g$\,. 
As the genus remains invariant under separable base field extensions,
we obtain \textit{Bertini's theorem on moving singular points}.

\medskip
\noindent
\textbf{Bertini--Sard Theorem}.\
\textit{In characteristic zero the fibration \mbox{$\f:T\rightarrow B$} 
does not admit moving singularities, i.e., almost all fibres are smooth.}

\medskip
Moreover, if the characteristic is a prime $p$, then by a theorem of
Tate~\cite{T1} the genus drop
$g-\og$ is a multiple of $\frac{p-1}{2}$\,,
and so Bertini's theorem can only fail if $p\leq 2g+1$.

By restricting the base $B$ to a dense open subvariety, we may assume that
all fibres are of dimension one, and that not only the total space $T$ but
also the base $B$ is smooth. Then the morphism $\f:T\rightarrow B$ is flat
(see \cite[Theorem 18.16]{E}), and so the arithmetic genus of each fibre
is equal to the arithmetic genus of the generic fibre
$\T\times_{\B}\,\Spec k(B)$ (see \cite[Ch.\ \textrm{III}, Theorem 9.9]{Ha}).
As the generic fibre is equal to $\R_{k(T)|k(B)}$\,, its arithmetic genus
is equal to the genus $g$ of the function field $k(T)|k(B)$. Moreover,
as the arithmetic genus is invariant under base field extensions, the
genus $g$ is also equal to the arithmetic genus of the geometric generic
fibre $\R_{k(T)|k(B)}\times_{\Spec k(B)}\,\Spec \ol{k(B)}$\,.

The geometric generic fibre is a complete integral curve over $\ol{k(B)}$
of geometric genus $\og$. It reflects the properties of the closed fibres
in a better way than the generic fibre. By semi-continuity, the geometric
genus of each closed fibre is smaller than or equal to $\og$, and equality
holds for almost all fibres.

\section{Curves of arithmetic genus $2$ on a cone in $\PP^4$}
\label{B}
\noindent
As becomes clear from the preceding section, we can apply the theory of
function fields in order to study the generic fibres of morphisms.

Let $F|K$ be a one-dimensional function field of genus $g=2$\,, and let
$\C$ be a canonical divisor of $F|K$. By the Riemann--Roch theorem its
degree and the dimension of its space of global sections are equal to
\[
\deg(\C) = 2g-2 =2 \quad \text{and} \quad\dim H^0(\C) =g = 2 \,.
\]
As $\,H^0(\C)\neq 0\,,$ the canonical divisor $\C$ is linearly equivalent
to a positive divisor, and so we can assume that $\C$ is positive. Since
the dimension of $H^0(\C)$\, is larger than $1$\,, there is a function
$x \in H^0(\C) \setminus K\,.$ As $\,\Div_\infty(x) \leq \C$\,, the
fundamental equality $[F:K(x)] = \deg \Div_\infty(x)$ implies
$[F:K(x)] \leq \deg(\C) = 2$\,. 
Since $g \neq 0$\,, and therefore $F \neq K(x)$\,, we conclude that
\[
[F:K(x)] = 2 \quad \text{and} \quad \Div_\infty(x) = \C\,.
\]
As $\,H^0(\C) = K \oplus Kx$\,, the canonical positive divisors different
from $\C$ are just of the form
\[
\Div_0(x-a)\quad\text{where} \;\; a\in K\,,
\]
and the canonical subfield of $F|K$\,, i.e., the field generated by the
global sections of any canonical positive canonical divisor, is equal to
the rational quadratic subfield $K(x)$ of $F|K$\,. Clearly
\[
H^0(\C^n)\cap K(x)=Kx^0\oplus\dots\oplus Kx^n\quad\text{for each}\;\;n\geq 2\,.
\]
Moreover, by Riemann's theorem
\[
\dim H^0(\C^n) =2n-1 \quad \text{for each}\;\; n\geq 2 \,.
\]
Thus there is a function $y\in H^0(\C^3)\setminus K(x)$\,, and we obtain
\[
H^0(\C^n)=
{\textstyle
\bigoplus\limits_{i=0}^n Kx^i 
\, \oplus \,   
\bigoplus\limits_{i=0}^{n-3}Kx^iy
}
\quad\text{for each}\;\;n\geq 3\,.
\]
As $y^2 \in H^0(\C^6)$ there is an equation
\[
y^2 + a(x) y + b(x) = 0
\]
where $a(x)=\sum_{i=0}^3 a_i x^i$\, and
\,$b(x)=\sum_{i=0}^6 b_i x^i$\,
are polynomials with coefficients in $K$ of formal degree $3$ and $6$,
respectively. Since $[F:K(x)] = 2$ and $y\notin K(x)$ we have
\[
F = K(x,y)
\]
and the above equation is the minimal equation of $y$ over $K(x)$.

Let $\vF = K(\vx,\vy),$ where
\,$\vy^2+\va(\vx)\vy+\vb(\vx)=0$\,,
be another genus-$2$ function field in the above normal form.
As $K(\vx)$ and $K(x)$ are the canonical subfields of $\vF|K$ and
$F|K$\,, respectively, it is easy to check that the $K$-isomorphisms
$\vF\,\tilde {\rightarrow}\,F$
are just given by the transformations
\[
\vx\longmapsto
\frac{\al_{11}x+\al_{12}}{\al_{21}x+\al_{22}}
\quad\text{and}\quad \vy\longmapsto
\frac{\be\,y+\sum\nolimits_{i=0}^3\ga_i x^i}
{(\al_{21}x+\al_{22})^3}
\]
where $(\al_{ij}) \in \operatorname{GL}_2(K),\ \be \in K^*\,\text{ and }\,
\ga_0,\ga_1,\ga_2,\ga_3\in K$\, such that
\[     \! \! \! \! 
\be\,a(x) =
\left(\al_{21}x+\al_{22}\right)^3\,\,
\va\!\left(\frac{\al_{11}x+\al_{12}}{\al_{21}x+\al_{22}}\right)
  + 2\, \sum\nolimits_{i=0}^3 \ga_i x^i
\]
and
\begin{equation*} 
\begin{aligned}
\be^2 \, b(x)  = &
\left(\al_{21}x+\al_{22}\right)^6\,\,
\vb\!\left(\frac{\al_{11}x+\al_{12}}{\al_{21}x+\al_{22}}\right)
  + \left(\sum\nolimits_{i=0}^3 \ga_i x^i \right)^2
\\
&+\left(\al_{21}x+\al_{22}\right)^3\,\,
\va\!\left(\frac{\al_{11}x+\al_{12}}{\al_{21}x+\al_{22}}\right)\,
\sum\nolimits_{i=0}^3\ga_i x^i\,.
\end{aligned}
\end{equation*}
A $K$-isomorphism of $\vF$ onto $F$ determines the transformation
coefficients uniquely up to the $\G_m$-action defined for each
$c \in \G_m(K)=K^*$ by the assignment
{\small
\[
(\al_{11},\al_{12},\al_{21},\al_{22},\be,\ga_0,\ga_1,\ga_2,\ga_3)
\longmapsto
(c\,\al_{11},c\,\al_{12},c\,\al_{21},c\,\al_{22},c^3\be,c^3\ga_0,c^3\ga_1,c^3\ga_2,c^3\ga_3).
\]
}
If $p\neq 2$ then by completing the square we can normalize $a(x)=0$\,,
and the freedom to transform is restricted by the conditions
$\ga_0=\dots=\ga_3=0$\,.

We will always assume that the function field $F|K$ is separable,
that is, it admits a separating variable. This means that
$x$ or $y$ is a separating variable, that is, $p\neq 2$\, or\,
$a(x)\neq 0$\, or \,$b'(x)\neq 0$\,. Then the condition that the base
field $K$ is algebraically closed in $F$ means that the minimal polynomial
\[
f(X,Y) := Y^2 + a(X)Y + b(X) \in K[X,Y]
\]
is absolutely irreducible, that is, it remains irreducible over the algebraic
closure $\oK$ of $K$\,. As $f$ is monic of degree $2$ in $Y$\,, this means
that there does not exist a polynomial $c(X) \in \oK[X]$ such that
$f(X,c(X))=0$\,. In particular, if $a(X)=0$ then $b(X)$ is not a square in
$\oK[X]$. As $f$ is absolutely irreducible, we can consider the base field
extension
\[
F{\cdot}\oK := F\otimes_K\oK = \oK(x)[Y]/f(x,Y)\oK(x)[Y]\,,
\]
that is, $F{\cdot}\oK = \oK(x,y)$\, where $x$ is transcendental over $\oK$
and \,$f(x,y)=0$\,.

Let $\,\R = \R_{F|K}$\, be the \textit{regular complete model} of the
function field $F|K$\,. It is a regular complete curve over $K$\,,
or more precisely, a geometrically integral regular complete
one-dimensional scheme of finite type over $\Spec(K)$. The algebraic set
$\,R=R_{F|K}\,$ of its closed points consists of the primes $\p$ of $F|K$\,,
whose local rings are the corresponding discrete valuation rings $\O_\p$
of $F|K$\,. The generic point is the only non-closed point of the scheme
$\R$, and its local ring is the function field $F$.

The extended scheme \,$\R\otimes_K\oK := \R\times_{\Spec(K)}\Spec(\oK)$\,
is an integral complete curve over $\oK$\,, whose function field is equal to
\,$F{\cdot}\oK := F\otimes_K\oK$\,. The points of \,$\R\otimes_K\oK$\,
lying over a prime $\p$ of $F|K$ correspond bijectively to the maximal
ideals of the semi-local ring \,$\O_\p{\cdot}\oK = \O_\p\otimes_K\oK$\,,
and their local rings are the corresponding localizations. Though the curve
$\R$ is regular, it may not be smooth, i.e., the extended curve \,$\R\otimes_k\oK$\,
may have singular points (see \cite{Z2}). Recall that a prime $\p$ of
$F|K$ is called \textit{singular}, if the domain \,$\O_\p{\cdot}\oK$ is
not normal, i.e., there is a singular point of the extended curve
\,$\R\otimes_K\oK$\, lying over $\p$\,. Since the arithmetic genus $p_a$
is preserved under base field extensions, 
we have
\[
p_a(\R\otimes_K\oK) = g = 2\,.
\] 
As \,$\R\otimes_K\oK$\, is an integral complete hyperelliptic curve of
arithmetic genus two, the global sections of the tri-canonical divisor
$\C^3$ define an embedding
\[
(1:x:x^2:x^3:y): R\otimes_K\oK \hookrightarrow \PP^4(\oK)
\]
(see \cite[Theorem~2.1]{St2}), and so the extended curve $R\otimes_K\oK$
can be realized as a curve on the cone
\[
S :=
\left\{
(u_0:u_1:u_2:u_3:v)\mid\rank
\left(
\begin{array}{ccc}
u_0 &u_1 &u_2 \\
u_1 &u_2 &u_3
\end{array}
\right)
< 2
\right\}
\]
in the $4$-dimensional projective space $\PP^4(\oK)$.
\bigskip

In the remainder of this section, we invert the preceding considerations.
Given an absolutely irreducible polynomial
\,$f = Y^2 + a(X)Y + b(X) \in K[X,Y]$\,, where $a(X)$ and $b(X)$ are
polynomials of formal degree $3$ and $6$\,, respectively. We consider the
function field
\[
F|K=K(x,y)|K\quad\text{where }\, y^2+a(x)y+b(x)=0\,,
\]
and we assume that it is separable, that is, \,$p\neq 2$\, or \,$a(x)\neq 0$\,
or \,$b'(x)\neq 0$\,.

If $p\neq 2$\,, then it is well known that the genus $g$ of $F|K$ is not
larger than two, and equality holds  if and only if the discriminant
\,$a(x)^2-4\,b(x)$\,  is square-free in \,$K[x]$\, and has degree $5$ or $6$\,.
If \,$p=2$\,, then it is more difficult to determine the genus \,$g$\,.

We will consider a possibly singular projective model of \,$F\oK|\oK$\,
lying on the cone \,$S\subset\PP^4(\oK)$. We note that the cone is the union
of the projective lines
\[
\qquad
L_u :=\{(1:u:u^2:u^3:v)\mid v\in\oK\}\cup\{Q\}\quad\text(u\in\oK)
\]
and
\[
\hspace{-11mm}   
L_\infty := \{(0:0:0:1:v)\mid v\in\oK\}\cup\{Q\}\,,
\]
which have the vertex \,$Q:=(0:0:0:0:1)$\, as their only common point.
The smooth locus \,$S\setminus\{Q\}$\, is described by the atlas
consisting of the two charts
\[
W:= S\setminus L_\infty =
\{(1:u:u^2:u^3:v)\mid(u,v)\in\oK^{\oplus 2}\}
\,\tilde{\longrightarrow}\;\oK^{\oplus 2}
\]
and
\[
\ \uW:= S\setminus L_0\,\, =
\{(\uu^3:\uu^2:\uu:1:\uv)\mid(\uu,\uv)\in\oK^{\oplus 2}\}
\,\tilde{\longrightarrow}\;\oK^{\oplus 2}.
\]
Let \,$C\subset S$\, be the projective integral curve over $\oK$ described in
the first chart $W$ by the minimal equation
\[
y^2 + a(x) y +b(x) = 0 \,,
\]
where the elements $x$ and $y$ of the function field $F$ have been realized
as the rational functions on $C$ that map each point $(1:u:u^2:u^3:v)$ of
\,$C\cap W$\, onto $u$ and $v$\,, respectively. With respect to the second
chart $\uW$ we have the local coordinate functions
\[
\ux:=x^{-1} \quad \text{and} \quad \uy:=x^{-3}y \,
\]
which satisfy the minimal equation
\[
\uy^2+(a_0\,\ux^3+a_1\,\ux^2+a_2\,\ux+a_3)\uy+b_0\,\ux^6+b_1\,\ux^5+\dots+b_6=0\,.
\]
Without using charts, the curve $C$ can be defined as the intersection
of the cone $S$ and the quadratic hypersurface cut out by the equation
\[
v^2+\sum_{i=0}^3 a_i u_i v + \sum_{i=0}^3 b_{2i}u_i^2
+\sum_{i=0}^2 b_{2i+1}u_i u_{i+1} = 0 \,.
\]
In particular, the vertex $Q$ does not lie on the curve. By calculating
the Hilbert polynomial of the curve \,$C\subset\PP^4$\,
(see \cite[p.\ 196]{RS}) we obtain the arithmetic genus:
\[
p_a(C) = 2\,.
\]
\begin{thm}
\label{B1}
The curve $C$ on the cone $S$ is isomorphic to the extended curve
\,$R_{F|K}\otimes_K\oK$\, if and only if the genus $g$ of the function
field \,$F|K$\, is equal to two.
\end{thm}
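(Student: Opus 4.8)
The plan is to prove the two implications separately. Both $C$ and $\R_{F|K}\otimes_K\oK$ are integral complete curves over $\oK$ with the same function field $F{\cdot}\oK=\oK(x,y)$, but their arithmetic genera differ a priori: we have computed $p_a(C)=2$ unconditionally, whereas $p_a(\R_{F|K}\otimes_K\oK)=p_a(\R_{F|K})=g$, since the arithmetic genus is preserved under base field extension and the genus of $F|K$ is by definition the arithmetic genus of its regular complete model. The ``only if'' direction is then immediate: an isomorphism $C\cong\R_{F|K}\otimes_K\oK$ equates arithmetic genera, forcing $2=g$.

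For the ``if'' direction, which is the substantive one, I would assume $g=2$ and show that the explicit construction of $C$ reproduces the tri-canonical embedding of $\R_{F|K}\otimes_K\oK$ described at the end of the first part of this section. The key point is to verify that the coordinate functions $x$ and $y$ on $C$ are genuine tri-canonical coordinates of $F|K$. First, since $[F:K(x)]=2$, the divisor $\Div_\infty(x)$ is positive of degree two and satisfies $\dim H^0(\Div_\infty(x))\geq 2$; on a function field of genus two this forces $\Div_\infty(x)$ to be a canonical divisor $\C$ and identifies $K(x)$ with the canonical subfield. Second, bounding the poles of $y$ through its minimal equation $y^2+a(x)y+b(x)=0$ shows $y\in H^0(\C^3)$, so that $1,x,x^2,x^3,y$ are five linearly independent elements of the $5$-dimensional space $H^0(\C^3)$ and hence a basis. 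Consequently the tri-canonical embedding is exactly $(1:x:x^2:x^3:y)$ and realizes $\R_{F|K}\otimes_K\oK$ as a closed integral curve on the cone $S$ that avoids the vertex $Q$; in the chart $W$ its defining equation is $v^2+a(u)v+b(u)=0$, which is precisely $C\cap W$. Since $C$ and this image are both closed integral curves in $S$, both avoid $Q$, and both agree on the dense open set $C\cap W$, they coincide, giving $C\cong\R_{F|K}\otimes_K\oK$.

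I expect the main obstacle to be this ``if'' direction, specifically the verification that the prescribed $x,y$ furnish a basis of $H^0(\C^3)$ together with the scheme-theoretic matching of the tri-canonical image with $C$. The hypothesis $g=2$ is used twice and essentially: it is exactly what makes the cited embedding theorem applicable, since that theorem requires $p_a(\R_{F|K}\otimes_K\oK)=2$, and it is what forces $\Div_\infty(x)$ to be canonical. When $g<2$ the two curves genuinely differ, the explicit model $C$ carrying more arithmetic genus (concentrated at extra singular points) than the base-changed regular model, consistent with the arithmetic-genus obstruction already recorded in the ``only if'' direction.
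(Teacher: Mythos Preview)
Your proof is correct and follows essentially the same approach as the paper's. The paper's own proof is extremely terse: the ``only if'' direction is argued exactly as you do via equality of arithmetic genera, and for the ``if'' direction the paper simply writes ``The opposite direction follows from the first part of this section,'' leaving implicit precisely the verification you spell out---that the given $x$ has $\Div_\infty(x)$ canonical, that $y\in H^0(\C^3)$, and hence that the tri-canonical embedding of $\R_{F|K}\otimes_K\oK$ coincides with $C$.
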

\begin{proof}
If \,$R\otimes_K\oK\cong C$\, then a fortiori
\,$p_a(R\otimes_K\oK) = p_a(C)$\,, that is, \,$g=p_a(C)$\, and therefore
\,$g=2$\, by the preceding equation. The opposite direction follows
from the first part of this section.
\end{proof}
By \textit{Hironaka's genus formula}~\cite{Hi} the \textit{geometric genus}
\,$p_g(C)$\, of the curve $C$\,, that is, the genus $\og$ of its function
field \,$F\oK|\oK$\,, is equal to
\[
\og = p_a(C) - \sum\dim(
\widetilde{\O}_{C,P}/\O_{C,P}
)
\] 
where the sum is taken over the singular points $P$ of $C$, and where
\,$\widetilde{\O}_{C,P}$\, denote the normalizations of the local rings
\,$\O_{C,P}$\,.
Applying Hironaka's genus formula to the extended curve \,$R\otimes_K\oK$\,
and localizing, we obtain
\textit{Rosenlicht's genus drop formula}
\[
g-\og\; = \sum \dim(
\widetilde{\O_\p{\cdot}\oK}/\O_\p{\cdot}\oK
)
\]
where $\p$ varies over the singular primes of \,$F|K$\,
(cf.\ \cite[Theorem 11]{Ro}).
To determine the genera $\og$ and $g$\,, we have to compute the dimensions of
\,$\widetilde{\O}_{C,P}/\O_{C,P}$\,
and
\,$\widetilde{\O_\p{\cdot}\oK}/\O_\p{\cdot}\oK$\,,
which are called the \textit{singularity degrees} of the points $P$
and the primes $\p$\,, respectively.

As the curve $C$ lies on the punctured cone
\,$S\setminus\{Q\}=W\cup\uW$\,, which via the two charts is locally isomorphic
to the affine plane, the singular points of the curve can be computed by the
Jacobian criterion, and their singularity degrees can be determined by a
finite number of blowups.

As the genus $g$ is preserved under separable base field extensions,
in order to determine $g$\,, we may
assume that the base field $K$ is separably closed. In this case the primes
of \,$F|K$\, correspond bijectively to the primes of \,$F\oK|\oK$\, and hence
to the branches of the curve $C$\,. A branch of $C$ that corresponds to a
singular prime of \,$F|K$\, is necessarily a singular branch and therefore
centered at a singular point of $C$\,. As the minimal equations in the two
charts are monic of degree $2$ in $y$ and $\uy$\,, such a singular point is
necessarily unibranch of multiplicity $2$\,. The singularity degrees of the
primes of \,$F|K$\, can be determined by an algorithm developed in \cite{BS}.

\section{Geometrically elliptic function fields of genus $2$ in characteristic $2$}
\label{C}
\noindent
Let $F|K$ be a one-dimensional separable function field of positive characteristic
$p$\,. As $F|K$ is separable, its \textit{Frobenius pullback}
\[
F_1|K := F^p K|K
\]
is the only subfield of \,$F|K$ such that the extension $F|F_1$\, is inseparable
of degree $p$\,. On the other hand, the Frobenius pullback can be realized as a
base field extension of $F|K$\,, or more precisely,
\[  
F_1|K \cong FK^{\frac{1}{p}}|K^{\frac{1}{p}}.
\] 
In particular, as the genus does not increase under base field extensions, we obtain
\[
g \geq g_1 \geq \og
\]
where $g_1$ denotes the genus of the Frobenius pullback \,$F_1|K$\,.

The function field \,$F|K$\, is called \textit{conservative} if its genus $g$
is equal to the genus $\og$ of \,$F\oK|\oK$\,. By Rosenlicht's genus drop
formula it is non-conservative (that is, $\og < g$) if and only if it admits
a singular prime. The function field $F|K$ is called \textit{geometrically
elliptic} (resp., \textit{geometrically rational}) if \,$F\oK|\oK$\, is
elliptic (resp., rational), that is, \,$\og =1$\, (resp., \,$\og =0$).

The genus-$2$ function field, written in the normal form
\,$y^2 + a(x)y + b(x) = 0$\, of Section~\ref{B}, is called of
\textit{separable type} if it is separable over its canonical quadratic
rational subfield \,$K(x)$\,, that is, \,$p\neq 2$\, or
\,$a(x)\neq 0$\,. 

\begin{thm}
\label{C1}
A one-dimensional separable function field \,$F|K$\, of genus \,$g=2$\,
in characteristic \,$p=2$\, is geometrically elliptic if and only if it
is non-conservative and of separable type.
Such a function field can be put into the normal form
\[
y^2+(a_2\,x^2 +a_0)y+b_6\,x^6 +b_4\,x^4 +b_0 = 0
\]
where \,$a_2, a_0, b_6, b_4, b_0 \in K$\, and
\[
\Delta:=b_6^2\,(a_2^6\,b_0+a_0^2\,a_2^4\,b_4+a_0^3\,a_2^3\,b_6 +a_0^4\,b_6^2)\neq 0\,.
\]
The modular invariant $\oj$ of the elliptic function field \,$F\oK|\oK$\, 
is equal to
\[
\oj\,=(j_1)^{\frac{1}{2}}\qquad \text{where}\qquad j_1 =\frac{a_2^{12}}{\Delta}
\]
is the modular invariant of the Frobenius pullback \,$F_1|K=K(x^2,y)$.

Conversely, if \,$a_2,\, a_0,\, b_6,\, b_4\ \text{and}\ b_0$\, are elements
of the base field $K$ satisfying \,$\Delta\neq 0$\,, then the above polynomial
equation defines a geometrically elliptic function field. Its genus is equal
to two, with the only exceptions that either
\,$j_1\in(K^*)^2$\, and \,$a_0 a_2\in K^2$\,, or
\,$j_1 =0$\, and \,$a_0b_6\in K^2$. 
\end{thm}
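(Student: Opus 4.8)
Throughout I would use Rosenlicht's and Hironaka's genus formulas from Section~\ref{B} together with the chain $g\ge g_1\ge\og$ recorded at the start of this section, and treat the characterisation, the normal form with its two invariants, and the converse with its exceptions in turn.

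For the characterisation, if $F|K$ is geometrically elliptic then $\og=1<2=g$, so it is non-conservative by Rosenlicht's formula; and if it were not of separable type then $a(x)=0$, so over the perfect field $\oK$ one extracts a square root and finds $F\oK=\oK(x,y)$ with $y^2=b(x)$ rational, forcing $\og=0$, a contradiction. Hence geometrically elliptic implies non-conservative and of separable type. For the reverse implication I assume $g=2$, non-conservative, and separable type; after passing to the normal form the singularity computation below shows $\Delta\neq0$ (otherwise $g<2$), and then the unique singular point has degree one, so $\og=1$.

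Next I would establish the normal form and the pullback. Using the transformation laws of Section~\ref{B}, and noting that in characteristic two the term $2\sum\ga_ix^i$ disappears from the law for $a$ while the $\ga_i$ act on $b$ as Artin--Schreier reductions, I reduce $a$ and $b$ to the even polynomials $a=a_2x^2+a_0$ and $b=b_6x^6+b_4x^4+b_0$. The point of this form is that all coefficients now lie in $K(x^2)$, so the Frobenius pullback is $F_1=F^2K=K(x^2,y)$; with $t=x^2$ it is the elliptic cubic $y^2+(a_2t+a_0)y=b_6t^3+b_4t^2+b_0$, whose characteristic-two modular invariant a direct computation gives as $j_1=a_2^{12}/\Delta$ (the same computation identifies $\Delta$, up to a perfect square, with the discriminant of this cubic). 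In particular $g_1=1$, so $g\in\{1,2\}$. The identity $\oj=(j_1)^{1/2}$ is then a Frobenius phenomenon: since $a(x)\neq0$ lies in $K(x^2)$, the relation $a(x)\,y=y^2+b(x)$ gives $y\in\oK(x^2,y^2)$, whence $F_1\oK=\oK(x^2,y)=\oK(x^2,y^2)=(F\oK)^{(2)}$ is the geometric Frobenius pullback of $E:=F\oK$; as the modular invariant is rational in the coefficients and $\oK$ is perfect, $j_1=j\big(E^{(2)}\big)=\oj^{\,2}$.

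The remaining and hardest part is the exact genus of $F|K$ and the exceptions. On the cone model $C\subset\PP^4$ the Jacobian criterion gives $\partial f/\partial y=a(x)$ and, in the normal form, $\partial f/\partial x=a'(x)y+b'(x)\equiv0$; hence the singular locus is the single unibranch double point over $x=\rho$ with $\rho^2=a_0/a_2$ (lying at infinity in the chart $\uW$ when $a_2=0$). One blow-up resolves it precisely when $\Delta\neq0$, giving singularity degree one and $\og=2-1=1$, so $\Delta\neq0$ characterises the geometrically elliptic (rather than geometrically rational) case and settles the converse's first claim. To pin down $g$ I would pass to a separably closed $K$, so that this point carries a single prime $\p$, with $g=2$ exactly when $\p$ is a singular prime and $g=1$ (conservative) otherwise. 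Running the algorithm of~\cite{BS} on $\p$---equivalently, deciding when the singular point and its one-step resolution are defined over $K$---is where the imperfect-field bookkeeping enters: the residue field and a suitable uniformiser of $\p$ are governed by whether $\rho$, the corresponding value of $y$, and the leading blow-up coefficient lie in $K$, and unwinding these square-class conditions produces $g=1$ in exactly the two listed cases, namely $j_1\in(K^*)^2$ with $a_0a_2\in K^2$ when $a_2\neq0$, and $j_1=0$ with $a_0b_6\in K^2$ when $a_2=0$. I expect this square-class analysis over the non-perfect base field to be the principal obstacle.
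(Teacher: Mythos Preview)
Your overall architecture and your treatment of the Frobenius pullback and the identity $\oj=j_1^{1/2}$ match the paper's. There is, however, a genuine gap in how you reach the normal form $a(x)=a_2x^2+a_0$. You propose to do this ``using the transformation laws of Section~\ref{B}'', correctly noting that in characteristic~$2$ the $\gamma_i$ do not affect $a$. But that is precisely the obstruction: $a(x)$ then transforms only as a binary cubic under $\operatorname{GL}_2(K)$ and the scaling by $\beta$, and a cubic with three \emph{distinct} roots over $\oK$ can never be brought to the form $a_2x^2+a_0$, since the latter always has a repeated root (at $(a_0/a_2)^{1/2}$ if $a_2\neq0$, or a triple root at $\infty$ if $a_2=0$). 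The paper supplies the missing step \emph{before} any normalization: since $F|K$ is non-conservative there is a singular prime, and at the corresponding singular point of $C$ the Jacobian criterion gives $a(\ox)=0$ and $a'(\ox)\,\oy+b'(\ox)=0$; because that point is unibranch, one deduces $a'(\ox)=0$ and hence $b'(\ox)=0$. Thus $a(x)$ has a multiple root, and only now can one send the simple root (which lies in $K$ by Vieta) to $\infty$ and obtain $a=a_2x^2+a_0$. The condition $b'(\ox)=0$ is then used once more to get $b_1=0$ after the transformations have normalized $b_5=b_3=b_2=0$. In short, the singular-prime analysis must precede and feed into the normalization, not follow it as in your outline.

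For the exceptions your plan is in the right spirit, but note that the paper's execution (the proof of Corollary~\ref{C2}) does not run through a ``$K$-rationality of the blow-up'' test; it writes $x^2$ or $x^4$ as an explicit Laurent series in a local parameter at a rational prime of $F_1$ or $F_2$ and reads off the singularity degree of $\p$ from the criteria of~\cite{BS}. In the third case ($a_2\neq0$, $a_0\notin K^2$) this still requires a further split according to whether $j_1\in K^2$ and whether $a_0\in K^2(j_1)$, so the bookkeeping is as intricate as you anticipate.
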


In proving the theorem we will also decide when two of these function
fields are isomorphic. We start the proof by considering a separable genus-$2$
function field \,$F|K=K(x,y)|K$\, given in the normal form
\[
\textstyle{
y^2+a(x)y+b(x)=0\quad\text{where }\; a(x)=\sum\nolimits_{i=0}^3 a_i\,x^i
\;\text{ and }\; b(x)=\sum\nolimits_{i=0}^6 b_i\,x^i \,.
}
\]
As \,$p=2$\, the separability of \,$F|K$\, means that \,$a(x)\neq 0$\, or
\,$b'(x)\neq 0$\,. If \,$F|K$\, is of inseparable type, i.e., \,$a(x)=0$\,,
then \,$F_1 =K(x^2,b(x))=K(x),\ \,F\oK =\oK(x,b(x)^{1/2})=\oK(x^{1/2})$\,
and therefore $g_1 = \og =0$\,.

Now we assume that \,$F|K$\, is of separable type, i.e., \,$ a(x)\neq 0$\,.
We further assume that \,$F|K$\, is non-conservative, that is, \,$\og <2$\,.
Let $K'$ be the separable closure of $K$ in $\oK$\,. As the genus is preserved
under separable base field extensions, the function field \,$F K'|K'$\,
is also non-conservative, i.e., it admits a singular prime \,$\p'$\,.
Let $\op$ be the unique prime of \,$F\oK|\oK$\, lying over $\p'$\, and let
$P$ be the corresponding singular point of the curve $C$ on the cone
\,$S\subset\PP^4(\oK)$.

We first assume that $P$ does not lie on the line $L_\infty$\,, that is,
$P\in W$\,. Then by the Jacobian criterion, the coordinates
\,$\ox=x(\op)$\, and \,$\oy=y(\op)$\, of $P$ in the first chart satisfy
\[
a'(\ox)\,\oy + b'(\ox) = 0\,,\;\; a(\ox)=0\; \text{ and }\;
\oy^2=b(\ox)\,. 
\]
Moreover, as by Section~\ref{B} the point $P$ is unibranch, we deduce
\,$a'(\ox)=0$\, and therefore \,$b'(\ox)=0$\,. Thus, $\ox$ is a zero of
\,$a(x)$\, of order larger than one.

If \,$P\in L_\infty$\, then a similar reasoning in the second chart shows
that the polynomial \,$a(x)$\, of formal degree $3$ has order larger than
one at \,$\ox=x(\op)=\infty$\,, that is, \,$\deg a(x)\leq 1$\,.
Moreover, \,$b_5 = 0$\, by analogy with the equation \,$b'(\ox) =0$\,
of the previous case.

In both cases the point $P$ is the only point of the curve $C$ lying on
the line \,$L_{\ox}$\,. As the polynomial \,$a(x)$\, of formal degree $3$
can only admit one multiple zero, we deduce that \,$\p'$\, is the only
singular prime of \,$FK'|K'$\,. In particular, denoting by $\p$ the prime
of \,$F|K$\, lying below $\p'$\,, we conclude that $\p$ is the only
singular prime of \,$F|K$\,.

We will first assume that the multiple zero $\ox$ of \,$a(x)$\, has  order
two. Replacing, if necessary, $x$ and $y$ by \,$x^{-1}$\, and \,$x^{-3}\,y$\,,
respectively, we can assume that \,$\ox\neq\infty$\,, that is, \,$\ox\in\oK$\,.
If \,$\deg a_3(x) =3$\,, then as \,$p=2$\,, by Vieta's formula 
\,$a_1 /a_0$\, is a simple root of \,$a(x)$\, belonging to the base field
$K$\,. Hence, replacing $x$ and $y$ by \,$(x\,-\,{a_1}/{a_0})^{-1}$\, and
\,$(x\,-\,{a_1}/{a_0})^{-3}\, y$\,, respectively, we can arrange that
\,$\deg a_2(x) = 2$\,, that is, \,$a(x)=a_2\,x^2 +a_0$\, and \,$a_2 \neq 0$\,.

If $\ox$ is a triple zero of \,$a(x)$\, and \,$\ox\neq\infty$\,, then
\,$\ox = a_1/a_0$\,, and so by transforming as above we can arrange that
$\infty$ is a triple zero of \,$a(x)$\,, that is, \,$\deg a(x) = 0$\,. 
Thus in each of the two cases we can normalize
\[
a(x) = a_2\, x^2 + a_0 \neq 0 \,.
\] 
The transformations that preserve this normalization preserve the line
\,$L_\infty$\,, and so by Section~\ref{B} they are just of the form
\[
\textstyle{
(x,y)\longmapsto (\al\,x+\de\,,\,\be\,y+\sum\limits_{i=0}^3\ga_i\,x^i)
}
\]
where \,$\al,\,\be\in K^*$\, and \,$\ga_0,\,\ga_1,\,\ga_2,\,\ga_3,\,\de\in K$\,.
To make further normalizations, we are just allowed to substitute
\[
a(x) \longmapsto \be^{-1}\, a(\al\,x+\de)
\]
and
\[
\textstyle{
b(x)\longmapsto\be^{-2}\left(b(\al\,x+\de)
+\sum\limits_{i=0}^3\ga_i^2\,x^{2i}
+a(\al\,x+\de)\,\sum\limits_{i=0}^3 \ga_i\, x^i \right).
}
\]
If \,$a_2\neq 0$\, then we can normalize \,$b_5 = b_3 = b_2 =0$\,, and
furthermore we get \,$b_1 = b'(\ox)=0$\,. If \,$a_2 =0$\, then \,$b_5=0$\,
and we can normalize \,$b_5 = b_3 =b_2 = 0$\,. Thus in both cases we have
\[
b(x) = b_6\,x^6 + b_4\,x^4 + b_0\,,
\]
and the freedom to transform is restricted by the conditions
\[
\ga_1 = \ga_3 = 0 \quad \text{and} \quad
a_2\,\ga_0+(a_2\,\de^2 + a_0)\,\al^{-2}\,\ga_2 = \de^4\,b_6\,.
\]
The coefficients of the minimal equation transform as follows:
\[
\begin{array}{l}
a_2 \longmapsto \al^2\,\be^{-1}\,a_2 \\ \vspace{1mm}
a_0 \longmapsto \be^{-1}\,(a_0 + a_2\,\de^2)\\ \vspace{1mm}
b_6 \longmapsto \al^6\,\be^{-2}\,b_6\\ \vspace{1mm}
b_4 \longmapsto \al^4\,\be^{-2}\,
   (b_4 +\al^{-4}\,\ga_2^2 + \al^{-2}\,\ga_2\,a_2+\de^2\,b_6)\\ \vspace{1mm}
b_0 \longmapsto \be^{-2}\,(b_0 + \ga_0^2 + \ga_0\,a_2\,\de^2
   +\ga_0\,a_0 + b_4\,\de^4 + b_6\,\de^6)\,.
\end{array}
\]
In particular, the class \,$b_6\mod (K^*)^2$\, is an invariant of the function field
\,$F|K$\,. Moreover, if \,$a_2\neq 0$\, (resp., \,$a_2 =0)$, then we can normalize
\,$a_2 =1$\, (resp., \,$a_0 =1)$, and the freedom to transform is furthermore
restricted by the condition \,$\be = \al^2$\, (resp., \,$\be =1)$. Allowing a
quadratic base field extension if necessary, we could also normalize
\,$b_4 =0$\, (resp., \,$b_0=0)$.  

As a hyperelliptic function fields admits exactly one quadratic subfield of
genus zero (see~\cite[Chapter\ \textrm{IV}, Theorem 9]{Ch}) and as
\,$F|K$\, is of separable type, we conclude that the genus \,$g_1$\, of the
Frobenius pullback \,$F_1|K=F^2 K|K$\, is different from zero. As
\,$F_1 =K(x^2,y^2)$\, and \,$a(x)\neq 0$\,, we have \,$y\in F_1$\, and
therefore \,$F_1 =K(z,y)$\, where \,$z:=x^2$\, and
\[
y^2+(a_0+a_2\,z)\,y + b_6\,z^3 + b_4\,z^2 + b_0 = 0 \,.
\]
We notice that $b_6\neq 0$\,, because otherwise by the Jacobian criterion
\,$F_1|K$\, would be the function field of a plane projective smooth conic
curve (respectively, of a projective line) if \,$a_0^2\,b_4\neq a_1^2\,b_0$\,
(respectively, \,$a_0^2\,b_4 = a_1^2\,b_0$), in contradiction with \,$g_1\neq 0$\,.
Moreover, we have \,$\Delta\neq 0$\,, because otherwise \,$F_1|K$\, would be
rational as the function field of a plane projective geometrically integral
cubic curve with a rational non-smooth point. Thus \,$F_1|K$\, is an elliptic
function field and therefore \,$g_1 = \og = 1$\,.

Let $j_1$\, be the modular invariant of \,$F_1|K$\, as introduced in
characteristic two by Tate~\cite{T2}. To compute \,$j_1$\, we replace \,$x$\,
and \,$y$\, by \,$b_6\,x$\, and \,$b_6\,y$\,, respectively, in order to get a
minimal equation that is monic in the two coordinate functions, and then we
obtain \,$j_1=a_2^{12}/\Delta$\, from Tate's formul{\ae}. As the Frobenius map
provides an isomorphism between the function fields
\,$FK^{1/2}|K^{1/2}$\,
and \,$F_1|K$\,, we conclude that the elliptic function fields
\,$FK^{1/2}|K^{1/2}$\, and \,$F\oK|\oK$\, have the invariant \,$j_1^{1/2}$\,.

To prove the last part of the theorem, let be given a polynomial
\[
f(X,Y)=Y^2+(a_2\,X^2+a_0)\,Y+b_6\,X^6+b_4\,X^4+b_0 \in K[X,Y]
\]
whose coefficients satisfy \,$\Delta\neq 0$\,. Then \,$b_6\neq 0$\, and
\,$(a_0,a_2)\neq(0,0)$\,, and this implies that \,$f(X,Y)$\, is absolutely
irreducible. Indeed, if there would exist a polynomial \,$c(X)\in\oK[X]$\,
such that \,$f(x,c(X))=0$\,, then as \,$b_6\neq 0$\, its degree would be
equal to $3$ and by comparing the terms of degree $3$ and $5$ we would get
the contradiction \,$a_0=a_2=0$\,.

Let \,$F|K = K(x,y)|K$\, be the separable function field given by the
absolutely irreducible equation \,$f(x,y)=0$\,. As \,$\Delta\neq 0$\,,
the Frobenius pullback \,$F_1|K = K(x^2,y^2)|K = K(x^2,y)|K$\, is an
elliptic function field, and so \,$g_1 = \og =1$\,.

To express the genus $g$ of \,$F|K$\, in terms of the coefficients, we will
apply Rosenlicht's genus drop formula. To determine the singularity degree
of a prime $\p$ of \,$F|K$\,, we look for a natural number $n$ such that
the restriction \,$\p_n$\, of $\p$ to the $n$-th Frobenius pullback
\,$F_n := F^{2^n}K|K$\, is rational. Such an integer exists if $K$
is separably closed (see \cite[Lemma 2.1]{BS}). We write the $2^n$-power
of the separating variable $x$ as a Laurent series in a local parameter
at \,$\p_n$\,. Then the singularity degree of $\p$\,, as well as other
properties of the local ring $\O_\p$ can be determined from
this Laurent series expansion (see~\cite{BS}). To finish the proof of the
theorem, we divide the discussion into three cases, according to
\,$a_2=0$\,, \,$a_0/a_2\in K^2$\, and \,$a_0/a_2\in K\setminus K^2$\,,
and provide a direct proof of the following corollary.
\begin{cor}
\label{C2}
A function field over a field $K$ of characteristic $2$
is geometrically elliptic of genus $2$ if and only if it can be put
into one of the three normal forms:
\[
\begin{array}{rl}    
(i)  & y^2+y+b_6\,x^6+b_4\,x^4+b_0=0\quad\text{where}\;\; b_6\notin K^2 \vspace{1mm}\\
(ii) & y^2+x^2\,y+b_6\,x^6+b_4\,x^4+b_0=0\quad
       \text{where }\,b_0\notin K^2\,\text{ and }\,b_6\neq 0 \vspace{1mm}\\
(iii)& y^2+(x^2+a_0)\,y+b_6\,x^6+b_4\,x^4+b_0=0 \quad
               \text{where }\, a_0\notin K^2\,\text{ and }\,\Delta \neq 0\,.
\end{array}
\]
The first case happens if and only if \,$j_1=0$\,.
\end{cor}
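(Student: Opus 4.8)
The plan is to read the corollary off the normal form already reached in the proof of Theorem~\ref{C1}, namely
\[
y^2+(a_2\,x^2+a_0)\,y+b_6\,x^6+b_4\,x^4+b_0=0,\qquad \Delta\neq 0,
\]
for which the Frobenius pullback $F_1|K$ was shown to be elliptic, so that $\og=g_1=1$. What remains is to decide, in terms of the coefficients, when the genus $g$ equals $2$ rather than $1$. Since $\og=1$ and this normal form forces $g\le 2$, Rosenlicht's genus drop formula reduces the question to computing the singularity degree of the prime $\p$ lying over the multiple zero of $a(x)$, the only possible singular prime: this degree is either $1$, giving $g=2$, or $0$, in which case $\p$ is regular over $\oK$ and $g=\og=1$.

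First I would perform the three normalizations dictated by the value of $a_2$ and the square class of $a_0/a_2$. If $a_2=0$, scaling yields $a_0=1$ and the first form; here $\Delta=b_6^4$ and $j_1=a_2^{12}/\Delta=0$. If $a_2\neq 0$ I set $a_2=1$ by means of $a_2\mapsto\al^2\be^{-1}a_2$ together with the resulting constraint $\be=\al^2$; the residual substitution $a_0\mapsto\al^{-2}(a_0+\de^2)$ then annihilates $a_0$ exactly when $a_0/a_2\in K^2$, producing the second form with $\Delta=b_6^2\,b_0$, whereas for $a_0/a_2\notin K^2$ no admissible $\de$ exists and one is left with the third form, in which $a_0\notin K^2$. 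Since adding a square to a non-square keeps it a non-square, this trichotomy is invariant under the admissible transformations, so the three forms are genuinely distinct.

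The technical heart is computing the singularity degree of $\p$ by the method of~\cite{BS}: choosing $n$ so that the restriction $\p_n$ is rational on the Frobenius pullback $F_n$, expanding $x^{2^n}$ as a Laurent series in a local parameter at $\p_n$, and reading off $\dim(\widetilde{\O_\p\cdot\oK}/\O_\p\cdot\oK)$. In the first form the singular point lies over $x=\infty$, where in the second chart $\uy^2=b_6$; I expect the square class of $b_6$ to be the sole obstruction, so that the degree is $1$ precisely when $b_6\notin K^2$, while $b_6\in K^2$ lets the branch split off a rational point and drops the genus to $1$. In the second form the singular point lies over the double zero $\ox=0$ of $a(x)$, where $\oy^2=b_0$, and the analogous computation should give degree $1$ iff $b_0\notin K^2$. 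In the third form the double zero $\ox=\sqrt{a_0}$ of $a(x)$ already lies outside $K$, and here the hypothesis $\Delta\neq 0$ alone should force the degree to be $1$, so that $g=2$ with no further square-class condition.

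The main obstacle is exactly this singularity-degree computation: because the singular point is typically non-rational and the structure is inseparable over $K(x^2)$, one must first pass to a Frobenius pullback to rationalize $\p$ and then control the Laurent expansion finely enough to detect when the $\delta$-invariant collapses from $1$ to $0$. Once the three computations are in place, substituting the normalized coefficients into $\Delta$ and $j_1=a_2^{12}/\Delta$ converts the conditions into the stated ones; fed back into Theorem~\ref{C1}, they reproduce its exceptional cases $j_1\in(K^*)^2,\ a_0a_2\in K^2$ and $j_1=0,\ a_0b_6\in K^2$. Finally, because $\Delta\neq 0$ makes $j_1=a_2^{12}/\Delta$ vanish exactly when $a_2=0$, the first normal form is characterized by $j_1=0$, as claimed.
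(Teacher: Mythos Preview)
Your plan is the paper's plan: split by the square-class trichotomy on $a_0/a_2$, normalize to the three forms, and for each compute the singularity degree of the unique candidate singular prime $\p$ via the Laurent-series machinery of \cite{BS}; your predicted outcomes in all three cases and the recovery of the exceptional clauses of Theorem~\ref{C1} are correct.

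One point where the paper's argument is more intricate than your sketch anticipates is case~(iii). The condition $a_0\notin K^2$ does not by itself make the computation immediate: the paper must first branch on whether $\oy=(b_6 a_0^3+b_4 a_0^2+b_0)^{1/2}$ lies in $K$ (equivalently whether $j_1\in(K^*)^2$), and when it does not, branch again on whether $a_0\in K^2(j_1)$. In that last subcase one has to pass to $\p_2$ and introduce an auxiliary variable $w=x+\al+\be y$ before \cite[Proposition~4.3]{BS} yields singularity degree $1$. Every branch does give degree $1$, so your stated condition ``$a_0\notin K^2$ and $\Delta\neq 0$'' is right, but be prepared for the argument to fan out rather than collapse.
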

\begin{proof}
(i) \ \ We assume that \,$a_2=0$\,, and normalize \,$a_0=1$\,.
Let \,$\p$\, be a singular prime of \,$F|K$\,. As the only singular point of the curve \,$C$\,
lies on the line \,$L_{\infty}$\,, we conclude that $\p$ is a pole of $x$.  Thus, the restriction  \,$\p_1$\, of
\,$\p$\, to the Frobenius pullback \,$F_1|K=K(x^2,y)|K$\, is the only pole of \,$z=x^2$. Hence 
\,$\deg(\p_1)=1,\,\ \ord_{\p_1}(z)=-2,\,\ \ord_{\p_1}(y)=-3$\, and \,$t:=z/y$\, is a local parameter at
 \,$\p_1$\,. To write \,$z=x^2$\, as a Laurent series in $t$, we notice that
\[
t^{-2}z^2+t^{-1}z=b_6z^3+b_4z^2+b_0
\]    
and by comparing successively coefficients we obtain
\[
z=b_6^{-1}(t^{-2}+b_4t^0+b_6t^1+\cdots)
\]    
where the dots stand for terms of order larger than $1$. Now we can apply \cite[Proposition 4.1]{BS}:
 If \,$b_6\notin K^2$\, (resp., \,$b_6\in K^2$) then the singularity degree of $\p$ is
 equal to one (resp., zero) and so by Rosenlicht's genus drop formula \,$g=2$\, (resp., \,$g=1$).

(ii)\ \  We assume that \,$a_2\neq 0$\, and \,$a_0/a_2\in K^2$\,, and normalize \,$a_2=1$\, and \,$a_0=0$\,.
 Let $\p$ be a singular prime of \,$F|K$\,. As the only singular point of the curve \,$C\subset \PP^4\,$
 lies on the line \,$L_0$\,, we conclude that $\p$ is a zero of \,$x$\,. Thus the restriction \,$\p_1$\, of \,$\p$\,
to \,$F_1|K=K(x^2,z)|K$\, is the only zero of \,$z=x^2$\,.

We first assume that \,$b_0$\, is a square, say \,$b_0=c^2$\, where \,$c\in K^*$. 
Then \,$F_1=K(t,z)$\,
 where \,$t:=y+c$\, and
\[
t^2+cz+tz+b_4z^2+b_6z^3=0\,.
\]
The prime \,$\p_1$\, is centered at the smooth rational point \,$(\ol{t},\ol{z})=(0,0)$\, of the affine
 plane cubic curve, and therefore \,$\deg(\p_1)=1$\,. Moreover, $t$ is a local parameter at
 \,$\p_1$\,. Expanding \,$z=c^{-1}t^2+c^{-2}t^3+\cdots$\, we deduce from \cite[Proposition 4.1]{BS}
 that the prime $\p$ is non-singular and therefore \,$g=1$\,.

Now we assume that \,$b_0\notin K^2$. Applying the preceding considerations to \,$F_2|K$\, 
instead of \,$F_1|K$\,, we conclude that the prime \,$\p_2$\, is rational, \,$t:=y^2+b_0$\, is a local
 parameter at $\,\p_2$\,, and $\,x^4=b_0^{-2}t^2+b_0^{-4}t^3+\cdots$\,. Then it follows from \cite[Proposition 4.3]{BS} 
that \,$\p|\p_1$\, is ramified and the singularity degree of $\p$ is equal to $1$\,, and therefore \,$g=2$\,.

(iii)\ \  We assume that \,$a_2\neq 0$\, and \,$a_0/a_2\notin K^2$\,, normalize \,$a_2=1\,,$ and so we have
 \,$a_0\notin K^2$\,. Let $\p$ be a singular prime of \,$F|K$\,. As the only singular point of $C$ lies on 
the line \,$L_{a_0^{1/2}}$\,, we conclude that $\p$ lies over the $(x^2+a_0)$-adic prime of the quadratic
 rational subfield $K(x)$ of $F|K$. Denoting by $\oy$ the residue class of $y \mod \p$, we have 
\[
\oy=(b_6a_0^3+b_4a_0^2+b_0)^{1/2}=b_6^{-1}j_1^{-1/2}+a_0^2b_6.
\]
We will first assume that \,$\oy\in K$\,, that is, \,$j_1\in (K^*)^2$\,. Then \,$\p_1$\, is a rational prime
 of \,$F_1|K=K(x^2,y)|K$\,, and \,$t:=y+\oy$\, is a local parameter at \,$\p_1$\,. To write \,$z=x^2$\,
 as a Laurent series in $t$, we enter into the polynomial equation
\[
(t+\oy)^2+(z+a_0)(t+\oy)+b_6z^3+b_4z^2+b_0=0
\]    
and obtain 
\[
x^2=a_0+b_6j_1^{1/2}t^2+b_6^2j_1t^3+\cdots.
\]
It now follows from \cite[Proposition 4.1]{BS} that the singularity degree of $\p$ is equal to one,
and therefore \,$g=2$\,.

Now we assume that \,$\oy\notin K$\,, that is, \,$j_1\notin K^2$\,. As \,$F_1=K(x^2,y)$\, we obtain
\[
F_2=K(x^4,y^2)=K(x^4,t)\ \text{ where } t:=y^2+\oy^2
\]
is a local parameter at the rational prime \,$\p_2$\,. From the polynomial equation 
\[
(t+\oy^2)^2+(x^4+a_0^2)(t+\oy^2)+b_6^2x^{12}+b_4^2x^8+b_0^2=0
\]
we get the power series expansion
\[
x^4=a_0^2+b_6^2j_1t^2+b_6^4j_1^2t^3+\cdots.
\]
By \cite[Proposition 4.1]{BS} the residue field of \,$\p_1$\, is equal to \,$K(j_1^{1/2})$\,.

If \,$a_0\notin K^2(j_1)$\, then by \cite[Proposition 4.3]{BS} the residue field of $\p$ is equal to
 \,$K(j_1^{1/2}, a_0^{1/2})$\,, the singularity degree of $\p$ is equal to $1$, and therefore \,$g=2$\,.

If \,$a_0\in K^2(j_1)$\,, that is, \,$a_0\in K^2(\oy^2)$\, say \,$a_0=\al^2+\be^2\oy^2$\, where \,$\al, \be\in K$\,, then defining \,$w:=x+\al+\be y$\,, we get the expansion
\[
w^4=(b_6^2j_1+\be^4)t^2+b_6^4j_1^2t^3+\cdots\,,
\] 
and so by \cite[Proposition 4.3]{BS} \,$\p|\p_1$\, is ramified, the singularity degree 
of \,$\p$\, is equal to 1, and therefore \,$g=2$\,.
\end{proof}

\section{Genus-2 function fields of inseparable type}
\label{D}
\noindent
Let \,$F|K$\, be a one-dimensional separable function field of genus $2$, written in the
 normal form \,$y^2+a(x)y+b(x)=0$\, of Section~\ref{B}. We assume that \,$F|K$\, is 
of \textit{inseparable type} or, equivalently, it is an inseparable extension of its canonical
 quadratic rational subfield \,$K(x)$\,, that is, \,$p=2$\, and \,$a(x)=0$\,. Therefore
\[
y^2=b(x)=\sum_{i=0}^6b_ix^i\in K[x] \ \text{ and }\ b'(x)=b_5x^4+b_3x^2+b_1\neq 0\,. 
\] 
By Section~\ref{B} the polynomial \,$b(x)$\, is uniquely determined by the
isomorphism class of \,$F|K$\, up  to the substitutions 
\[
b(x)\mapsto \be^{-2}\left(\al_{21}x+\al_{22}\right)^6\,\,
b\!\left(\frac{\al_{11}x+\al_{12}}{\al_{21}x+\al_{22}}\right)
  + \sum\nolimits_{i=0}^3 \ga_i^2 x^{2i} 
\]
where \,$(\al_{ij})\in\operatorname{GL}_2(K),\ \,\be\in K^*$\, and \,$\ga_0, \ga_1, \ga_2, \ga_3\in K$\,. In particular,
\[
b'(x)\mapsto \be^{-2}\det(\al_{ij})\left(\al_{21}x+\al_{22}\right)^4\,\,
b'\!\left(\frac{\al_{11}x+\al_{12}}{\al_{21}x+\al_{22}}\right).
\]
Replacing if necessary $x$ by $\frac{1}{x}$ or $1+\frac{1}{x}$, we can 
arrange that \,$b_5\neq 0$\,, and so we can normalize \,$b_5=1\,$.

\begin{thm}
\label{D1}
A one-dimensional separable function field of genus \,$g=2$\,
in characteristic \,$p=2$\, is geometrically rational if and only if it
is of inseparable type,
that is, it can be put into the normal form
\[
y^2=b(x)=\sum_{i=0}^6b_ix^i\quad  \text{where }\, b_5=1\, \text{ and }\, b_0,b_1,b_2, b_3, b_4, b_6\in K\,. 
\]
If \,$b_3\neq 0$\, and if (after an eventual quadratic separable base field extension) the two
 roots of the polynomial \,$b'(T^{1/2})=T^2+b_3T+b_1$\, belongs to the base field \,$K$\,, then
 the function field of inseparable type has genus two if and only if each such root $c$
 satisfies \,$c\notin K^2$\, or \,$b_0+b_2c+b_4c^2+b_6c^3\notin K^2$\,.

If \,$b_3=0$\,, then the genus is equal to two if and only if one of the following three cases occurs:

\begin{enumerate}
\item[(i)]\ $b_1\in K\setminus K^2$ \smallskip
\item[(ii)]\ $b_1\in K^2\setminus K^4$\,, and \,$b_0+b_1b_4\notin K^2$\, or \,$b_2+b_1b_6\notin K^2$ \smallskip
\item[(iii)]\ $b_1\in K^4$\, and \,$\sum\nolimits_{i=0}^3 b_{2i}b_1^{i/2}\notin K^2$\,.   
\end{enumerate}
\end{thm}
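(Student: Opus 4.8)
The plan is to treat the biconditional and the genus criteria separately. For the characterization of geometric rationality, the forward implication is the computation already indicated in Section~\ref{C}: for a field of inseparable type, setting $u=x^{1/2}$ gives $y=\sum_i b_i^{1/2}u^i\in\oK(u)$, so $F\oK=\oK(x^{1/2})$ is rational and $\og=0$. Conversely, a geometrically rational field of genus two is non-conservative ($\og=0<2=g$); if it were of separable type, Theorem~\ref{C1} would force $\og=1$, so it must be of inseparable type. The normalization $b_5=1$ has been secured before the statement, and its real purpose is that, by the Jacobian criterion of Section~\ref{B}, it removes the point at infinity from the singular locus.

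Since $\og=0$, Rosenlicht's genus drop formula collapses to $g=\sum_\p\de(\p)$, the sum of the singularity degrees over the singular primes of $F|K$, so everything reduces to locating these primes and evaluating $\de(\p)$. Writing $b(x)=e(x^2)+x\,b'(x)$ with $e(T)=b_0+b_2T+b_4T^2+b_6T^3$ exhibits $x$ as an element of $F^2K$, whence $F_1=K(x)$ is rational and, inductively, $F_n=K(x^{2^{n-1}})$. The singular primes lie over the zeros $\ox$ of $b'(x)=x^4+b_3x^2+b_1$, each unibranch of multiplicity two, and I would compute $\de(\p)$ exactly as in the proof of Corollary~\ref{C2}: descend the Frobenius tower to the first level $n$ at which the restriction $\p_n$ is rational, expand $b(x)$ in a local parameter $t$ at $\p_n$, and read off the singularity degree from Propositions~4.1 and~4.3 of~\cite{BS}. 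The one conceptual point to keep in mind is that an odd-order term such as $b_3\,t^3$ produces only a geometric cusp that the regular model resolves at no cost to the genus (so that $C\neq\R_{F|K}\otimes_K\oK$ when $g<2$, cf.\ Theorem~\ref{B1}); the genus is lost only through non-square coefficients in the constant and even-order terms, which force inseparable residue-field or ramification jumps.

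For $b_3\neq0$ the polynomial $T^2+b_3T+b_1$ is separable, so after at most a quadratic separable extension---harmless, since the genus is a separable invariant---its roots $c_1,c_2$ lie in $K$ and $b'(x)=(x^2+c_1)(x^2+c_2)$ has two distinct double zeros $\ox_i=c_i^{1/2}$, each carrying one candidate prime $\p^{(i)}$ with residue value $\oy_i=e(c_i)^{1/2}$. The local expansion of $b(x)$ at $\ox_i$ has first odd term $b_3\,t^3$, so by the remark above $\p^{(i)}$ is genuinely singular (necessarily of degree one) precisely when the resolution is forced off $K$, that is, when $\ox_i\notin K$ or $\oy_i\notin K$, i.e.\ $c_i\notin K^2$ or $e(c_i)\notin K^2$. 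Thus $g=\de(\p^{(1)})+\de(\p^{(2)})=2$ if and only if both roots satisfy the stated condition.

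For $b_3=0$ there is a single zero $\ox=b_1^{1/4}$, now a quadruple zero, so the first odd term of $b(x)$ is $t^5$ and the lone prime can absorb the entire drop, $\de(\p)\le2$. The first rational level is $n=1,2,3$ according as $b_1\in K^4$, $b_1\in K^2\setminus K^4$, or $b_1\in K\setminus K^2$, and iterating the expansion down these inseparable steps yields the three cases. In case (iii) ($n=1$, rational centre) the single step contributes the full degree two exactly when the constant term $\sum_{i=0}^3 b_{2i}b_1^{i/2}=e(b_1^{1/2})$ is a non-square; in case (ii) ($n=2$) the step $F_1|F_2$ contributes automatically because $\ox=b_1^{1/4}\notin K$, while the step $F|F_1$ contributes iff $b_0+b_1b_4\notin K^2$ or $b_2+b_1b_6\notin K^2$---the two components of $e(b_1^{1/2})=(b_0+b_1b_4)+b_1^{1/2}(b_2+b_1b_6)$ in the basis $\{1,b_1^{1/2}\}$ arising from the expansion in $\tau=x^2+b_1^{1/2}$; in case (i) the non-squareness of $b_1$ itself propagates through the tower and forces $\de(\p)=2$ with no side condition. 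The main obstacle is exactly this $b_3=0$ bookkeeping: one must propagate the square-class obstructions correctly through several inseparable layers, separate the odd-order terms absorbed by the regular model from the non-square constant and even-order coefficients that actually raise the degree, and verify that the accumulated conditions collapse to the clean criteria (i)--(iii).
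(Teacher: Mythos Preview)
Your overall plan coincides with the paper's proof: reduce the biconditional to Theorem~\ref{C1}, identify the Frobenius tower as $F_n=K(x^{2^{n-1}})$, locate the candidate singular primes over the zeros of $b'(x)=x^4+b_3x^2+b_1$, descend to the first rational level, expand in a local parameter there, and read off the singularity degree from \cite{BS}. The $b_3\neq0$ case and cases (i), (iii) of $b_3=0$ are handled in the same spirit as the paper, though the paper carries out the expansions explicitly (for $b_3\neq0$ it splits into $c\in K^2$ and $c\notin K^2$, and in the latter again into $\oy\in K(c^{1/2})$ or not) rather than relying on the heuristic ``resolution forced off $K$''.

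There is, however, a genuine error in your treatment of case~(ii). You assert that the step $F_1|F_2$ ``contributes automatically'' one unit of singularity degree because $\ox=b_1^{1/4}\notin K$, so that $\de(\p)$ is either $1$ or $2$ according to the square-class condition on the step $F|F_1$. This additive picture is wrong: the singularity degree does not decompose over the tower in that way. Writing $b_1=c^2$ with $c\notin K^2$ and expanding $y^4$ in the local parameter $t=x^2+c$ at the rational prime $\p_2$, the paper finds that when \emph{both} $b_0+b_1b_4\in K^2$ and $b_2+b_1b_6\in K^2$ (say $=\al^2,\be^2$), the substitution $z:=y+\al+\be x$ kills the $t^2$-term and yields $z^4=(c+b_4^2+b_6^2c^2)t^4+t^5+b_6^2t^6$, whence $\de(\p)=0$, not $1$. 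So in this subcase $g=0$, contradicting the value $g=1$ your heuristic would predict. Your final criterion for (ii) happens to be correct only because the dichotomy ``$\de(\p)=2$ versus $\de(\p)<2$'' coincides with the stated square-class condition; but a proof built on the claimed step-by-step contributions would not survive scrutiny. The remedy is exactly what the paper does: expand at $\p_2$ and apply \cite{BS} directly, splitting on whether $\oy\in K(c^{1/2})$ and, if so, whether the resulting $t^2$-coefficient vanishes.
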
 
\begin{proof}
The first part of the theorem follows from the first part of Theorem \ref{C1}.

Let \,$F|K$\, be the function field given by the equation \,$y^2=b(x)$\,. As \,$b'(x)\neq 0$\,,
 the $n$-th Frobenius pullback is equal to 
\[
F_n=F^{2^n}K=K(x^{2^n},y^{2^n})=K(x^{2^n},b(x)^{2^{n-1}})=K(x^{2^{n-1}}) 
\]
for each natural number $n$. If $\p$ is a singular prime of \,$F|K$\,, then by the Jacobian
 criterion it is necessarily a zero of \,$b'(x)=x^4+b_3x^2+b_1$\,. 

We will first assume that \,$b_3\neq 0$\,, and that we can factorize
\[
b'(x)=(x^2+c)(x^2+d)\, \text{ where }\, c,d\in K\, \text{ and }\, c\neq d\,. 
\]
Let \,$\p\in \R_{F|K}$\, be the zero of \,$x^2+c$\,. If \,$c\in K^2$\, then \,$\p_1$\, is a rational prime, \,$t:=x+c^{1/2}$\,
 is a local parameter at \,$\p_1$\,, 
\[
y^2=(b_0+b_2c+b_4c^2+b_6c^3)t^0+\big(b_2+c^{1/2}(c+d)\big)t^2+(c+d)t^3+\cdots,
\]
and it follows from \cite[Proposition 4.1]{BS} that the singularity degree of $\p$ is equal to $1$
 (respectively, $0$) if \,$b_0+b_2c+b_4c^2+b_6c^3$\, does not belong (respectively, belongs)
 to $K^2$.

Now we assume that \,$c\notin K^2$. Then \,$\p_1$\, is the $(x^2+c)$-adic prime of \,$F_1=K(x)$,
 \,$\p_2$\, is rational, \,$t:=x^2+c$\, is a local parameter at \,$\p_2$\,, and 
\[
y^4=(b_0+b_2c+b_4c^2+b_6c^3)^2t^0+\big(b_2^2+c(c+d)^2\big)t^2+(c+d)^2t^3+\cdots.
\]
The residue class of \,$y\! \mod \p$ is equal to \,$\oy=(b_0+b_2c+b_4c^2+b_6c^3)^{1/2}$\,.

If \,$\oy\notin K(c^{1/2})$\, then \,$\p|\p_1$\, is inertial and by \cite[Theorem 3.2]{BS} the singularity
degree of $\p$ is equal to $1$.

If \,$\oy\in K(c^{1/2})$\, say \,$\oy=\al+\be c^{1/2}$\, where \,$\al, \be\in K$\,, and if \,$z:=y+\al+\be x$\, then
\[
z^4=(c(c+d)^2+b_0^2+\be^4)t^2+(c+d)^2t^3+\cdots,
\]
and, as \,$c\notin K^2$\,, \,$c\neq d$\, and hence the coefficient of \,$t^2$\, is non-zero, we deduce that
 \,$\p|\p_1$\, is ramified, \,$\ord_{\p}(z)=1$\, and by \cite[Theorem 3.2]{BS} we again conclude that the 
singularity degree of $\p$ is equal to $1$.

Thus the singularity degree of the two zeros of \,$b'(x)$\, are not larger than one, 
and so by Rosenlicht's genus drop formula the genus $g$ is equal to two if and only if
the two singularity degrees are equal to one.

Now we assume that \,$b_3=0$\,. Let \,$\p\in\R_{F|K}$\, be the zero of \,$b'(x)=x^4+b_1$\,.

(i)\ \ We assume that $\,b_1\in K\setminus K^2$. Then \,$\p_1$\, is the $(x^4+b_1)$-adic prime
 of \,$F_1=K(x)$\,, \,$\p_3$\, is rational, \,$t:=x^4+b_1$\, is a local parameter at \,$\p_3$\,, and
\[
y^8=(b_0^2+b_2^2b_1+b_4^2b_1^2+b_6^2b_1^3)^2t^0+(b_2+b_1b_6)^4t^2+(b_1+b_4^4+b_6^4b_1^2)t^4+t^5+b_6^4t^6\,.
\]
The residue class of \,$y\! \mod \p$\, is equal to \,$\oy=(b_0^2+b_2^2b_1+b_4^2b_1^2+b_6^2b_1^3)^{1/4}$\,.

If \,$\oy\notin K(b_1^{1/4})$\, then \,$\p|\p_1$\, is inertial and by \cite[Theorem 3.2]{BS} the singularity degree of $\p$ is equal to two.

Now we assume that \,$\oy\in K(b_1^{1/4})$\, say \,$\oy^4=\al^4+\be^4b_1+\ga^4b_1^2+\de^4b_1^3$\,
 where \,$\al, \be, \ga, \de\in K$\,. Then \,$(b_0+b_4b_1+\al^2+\ga^2b_1)^2=b_1(b_2+b_6b_1+\be^2+\de^2b_1)^2$\,. As \,$b_1\notin K^2$\,, 
this means 
\[
b_0=b_4b_1+\al^2+\ga^2b_1\ \text{ and }\ b_2=b_6b_1+\be^2+\de^2b_1\,. 
\] 
Defining \,$z:=y+\al+\be x+\ga x^2+\de x^3$\, we obtain
\[
z^8=(b_1+b_4^4+b_6^4b_1^2+\ga^8b_1^2)t^4+t^5+(b_6^4+\de^8)t^6\,.
\]
As \,$b_1$\, is not a square, the coefficient of \,$t^4$\, is non-zero, hence \,$\p|\p_1$\, is ramified, $z$ is a
 local parameter at $\p$, and by \cite[Theorem 3.2]{BS} the singularity degree of $\p$ is again 
equal to two.

(ii)\ \  We assume that \,$b_1\in K^2\setminus K^4$\, say \,$b_1=c^2$\, where \,$c\in K\setminus K^2\,$. Then \,$\p_1$\, is
 the $(x^2+c)$-adic prime of \,$F_1=K(x)$\,, \,$\p_2$\, is rational, \,$t:=x^2+c$\, is a local parameter
 at \,$\p_2$\,, and 
\[
y^4=(b_0+b_2c+b_4^2c^2+b_6c^3)^2t^0+(b_2+b_6c^2)^2t^2+(c+b_4^2+b_6^2c^2)t^4+t^5+b_6^2t^6\,.
\]
The residue class of \,$y\! \mod \p$\, is equal to \,$\oy=(b_0+b_2c+b_4c^2+b_6c^3)^{1/2}$\,.

If \,$\oy\notin K(c^{1/2})$\, then \,$\p|\p_1$\, is inertial and by \cite[Theorem 3.2]{BS} the singularity degree of $\p$ is equal to two.

Now we assume that \,$\oy\in K(c^{1/2})$\, say \,$\oy^2=\al^2+\be^2c$\,, i.e., \,$b_0+b_4c^2+\al^2=(b_2+b_6c^2+\be^2)c$\,
 where \,$\al, \be\in K$\,. Defining \,$z:=y+\al+\be x$\, we obtain 
\[
z^4=(b_2+b_6c^2+\be^2)^2t^2+(c+b_4^2+b_6^2c^2)t^4+t^5+b_6^2t^6\,.
\]
If \,$b_2\neq b_6c^2+\be^2$\,, then \,$\p|\p_1$\, is ramified, $z$ is a local parameter at $\p$, and by \cite[Theorem 3.2]{BS} 
the singularity degree of $\p$ is again equal to two. If \,$b_2=b_6c^2+\be^2$\,, i.e.,
 \,$b_0=b_4c^2+\al^2$\,, then the singularity degree of $\p$ is equal to zero.

(iii)\ \ We assume that \,$b_1\in K^4$\, say \,$b_1=c^4$\, where \,$c\in K$\,. Then \,$\p_1$\, is rational,
 \,$t:=x+c$\, is a local parameter at \,$\p_1$\,, and
\[
y^2=(b_0+b_2c^2+b_4^2c^4+b_6c^6)t^0+(b_2+b_6c^4)t^2+(c+b_4+b_6c^2)t^4+t^5+b_6t^6\,.
\]
Now we can apply \cite[Proposition 4.1]{BS}: If \,$\sum\nolimits_{i=0}^3 b_{2i}c^{2i}\notin K^2$\, then the singularity
 degree of $\p$ is equal to two. If \,$\sum\nolimits_{i=0}^3 b_{2i}c^{2i}\in K^2$\, then the singularity degree of $\p$ is 
equal to one (respectively, zero) if \,$b_2+b_6c^4\notin K^2$\, (respectively, \,$b_2+b_6c^4\in K^2$).            
\end{proof}

\section{Fibrations by non-smooth curves of arithmetic genus $2$
in characteristic $2$.}
\label{E}
\noindent
Let $k$ be an algebraically closed field of characteristic $2$\,, and
\[
S = S(k) :=
\left\{
(u_0:u_1:u_2:u_3:v)\in \PP^4(k) \mid\rank
\left(
\begin{array}{ccc}
u_0 &u_1 &u_2 \\
u_1 &u_2 &u_3
\end{array}
\right)
< 2
\right\}
\]
be the cone that in Section~\ref{B} has been considered over the field $\oK$.
Influenced by Section~\ref{C} we announce:
\begin{thm}
\label{E1}
The algebraic variety
\begin{equation*} 
\begin{aligned}
Z :=   \text{{\Large \{}} 
&
((u_0:u_1:u_2:u_3:v),(a_0,a_2,b_0,b_4,b_6))\in S\times\A^5\mid
\\  & \;\;\;
v^2+(a_0\,u_0 +a_2\,u_2)v+b_0\,u_0^2 +b_4\,u_2^2 +b_6\,u_3^2\, =\, 0\,
\text{{\Large \}}}
\end{aligned}
\end{equation*}
is an irreducible smooth sixfold. The projection morphism
\[
\pi : Z \longrightarrow \A^5
\]
is proper and flat, and its fibres are non-smooth projective curves of
arithmetic genus $2$\,, which do not pass through the vertices.

The fibre over the point \,$(a_0,a_2,b_0,b_4,b_6)\in \A^5(k)$\, 
is a geometrically elliptic curve (i.e., an integral curve of geometric
genus 1) if and only if
\[
\Delta:=
b_6^2\,(a_2^6\,b_0+a_0^2\,a_2^4\,b_4+a_0^3\,a_2^3\,b_6 +a_0^4\,b_6^2)\neq 0\,.
\]
In this case, the fibre has a cusp as its only singularity, and the elliptic
modular invariant of its non-singular projective model is equal to
\,$a_2^6/\Delta^{1/2}$.
\end{thm}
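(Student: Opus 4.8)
The plan is to separate the global geometry of $Z$ from the fibrewise analysis. Writing the punctured cone as the union $S\setminus\{Q\}=W\cup\uW$ of the two affine charts of Section~\ref{B}, one checks first that the vertex $Q=(0:0:0:0:1)$ lies on no fibre, since substituting it into the defining quadratic yields $v^2=1\neq 0$; thus $Z$ is covered by $Z\cap(W\times\A^5)$ and $Z\cap(\uW\times\A^5)$. In the chart $W$ I would use $(x,y,a_0,a_2,b_0,b_4,b_6)$ as coordinates on $\A^2\times\A^5=\A^7$, where $Z$ is the single hypersurface $g=0$ with
\[
g:=y^2+(a_0+a_2\,x^2)\,y+b_0+b_4\,x^4+b_6\,x^6 .
\]
Since $\partial g/\partial b_0=1$, and analogously $\partial g/\partial b_6=1$ in the chart $\uW$, the Jacobian never vanishes, so $Z$ is a smooth sixfold; it is irreducible because $g$ is absolutely irreducible over $k(a_0,a_2,b_0,b_4,b_6)$, being monic of degree two in $y$ with a constant term that depends linearly on the free parameter $b_0$. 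The projection $\pi$ is proper as the restriction to the closed subscheme $Z\subset\PP^4\times\A^5$ of the projection to $\A^5$, and it is flat by the equidimensionality criterion used in Section~\ref{A}: $Z$ is smooth, $\A^5$ is regular, and each fibre is one-dimensional since the quadratic never vanishes identically on the irreducible surface $S$. Flatness together with Section~\ref{B} gives $p_a=2$ on every fibre.

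Next I would establish non-smoothness of every fibre by the Jacobian criterion, exploiting the characteristic. In the chart $W$ one has $\partial g/\partial x\equiv 0$ and $\partial g/\partial y=a_0+a_2\,x^2$, so a point of the fibre is singular precisely when $a_0+a_2\,x^2=0$; if $a_2\neq 0$ this is a single point over $\ox=(a_0/a_2)^{1/2}$, if $a_2=0\neq a_0$ the parallel computation in $\uW$ gives one singular point over $\ux=0$, and if $a_0=a_2=0$ the whole fibre is non-reduced. In every case the fibre fails to be smooth, and it meets no vertex.

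For the geometric classification I would pass to the Frobenius pullback of the fibre, which is integral whenever $\Delta\neq 0$ by the absolute-irreducibility criterion of Section~\ref{C}. Setting $z=x^2$, the field $F_1=k(x^2,y)=k(z,y)$ is defined by the cubic
\[
y^2+(a_0+a_2\,z)\,y+b_6\,z^3+b_4\,z^2+b_0=0 ,
\]
and exactly as in the proof of Theorem~\ref{C1} this is an elliptic function field precisely when $\Delta\neq 0$, with modular invariant $j_1=a_2^{12}/\Delta$; otherwise it is a conic, a line, or a rational cubic, all of genus zero. Over the algebraically closed field $k$ the Frobenius map $a\mapsto a^2$ identifies the function field $F=k(x,y)$ of the fibre with $F_1=F^2$, so the geometric genus satisfies $\og=g_1$, and since this identification is semilinear over the perfect field $k$ the modular invariant of the non-singular model obeys $\oj^2=j_1$, giving $\oj=a_2^6/\Delta^{1/2}$. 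Hence the fibre is geometrically elliptic exactly when $\Delta\neq 0$; when $\Delta=0$ it is non-integral or of geometric genus zero, so never geometrically elliptic.

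Finally, when $\Delta\neq 0$, Hironaka's genus formula of Section~\ref{B} gives $\sum_P\dim(\widetilde{\O}_{C,P}/\O_{C,P})=p_a-\og=1$; as every singular point of such a curve is unibranch of multiplicity two (Section~\ref{B}), there is exactly one singular point, its singularity degree is one, and the only unibranch singularity of degree one has value semigroup $\langle 2,3\rangle$, that is, an ordinary cusp. The main obstacle I anticipate is the bookkeeping over the locus $\Delta=0$: one must separate the non-reduced fibres $a_0=a_2=0$ from the reduced rational ones and, in the subcase $b_6=0$, account for the additional singular point lying over the line $L_\infty$, so as to be sure that geometric genus one never occurs there. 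By comparison, once the Frobenius identity $\og=g_1$ and the semigroup count are in hand, the smoothness, flatness, and cusp assertions are routine.
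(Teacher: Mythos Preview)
Your plan tracks the paper's proof closely: the same two-chart description, the same flatness argument via equidimensionality, and the same appeal to the Frobenius pullback $F_1=k(z,y)$ and Theorem~\ref{C1} for the equivalence $\Delta\neq0$ and the identity $\oj=j_1^{1/2}=a_2^6/\Delta^{1/2}$. Your observation that $\partial g/\partial b_0=1$ (and $\partial/\partial b_6=1$ in the second chart) is the cleanest way to see smoothness of $Z$; the paper states smoothness without displaying this derivative. Where the paper determines the singularity type by explicit blowup sequences, you instead invoke Hironaka's formula to pin the singularity degree at $p_a-\og=1$ and then classify---a legitimate and slightly more conceptual route.

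There is, however, one genuine slip. Your appeal to Section~\ref{B} for ``every singular point of such a curve is unibranch of multiplicity two'' misreads that passage: what is proved there is that a singular point \emph{carrying a singular branch} must be unibranch, and over the algebraically closed field $k$ there are no singular primes, so the hypothesis is vacuous. A node also has multiplicity two and singularity degree one, so your Hironaka count alone does not yet separate cusp from node. The repair is immediate and uses an observation you have already made: since $\partial g/\partial x\equiv0$ in characteristic two, the second-order part of $g$ at the singular point has no $XY$ term and reads $Y^2+cX^2=(Y+c^{1/2}X)^2$, a double line; this rules out the node, and the only remaining multiplicity-two singularity of degree one is the ordinary cusp $\langle2,3\rangle$. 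With that adjustment your argument goes through, and your caveat that the locus $\Delta=0$ requires separate bookkeeping is apt---the paper handles it by listing the degenerations (tacnode, ramphoid cusp, extra node on $L_\infty$, reducible and non-reduced fibres) case by case.
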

During the proof of the theorem we will describe the singularities
of all fibres, and discuss how they move.
We start the proof by noting that the variety $Z$ is the closed subset of
\,$S\times\A^5$\, contained in the smooth subvariety
\,$(S\setminus\{Q\})\times\A^5$\, that in the charts
\,$W\times\A^5\tilde{\longrightarrow}\A^7\,$\, and
\,$\uW\times\A^5\tilde{\longrightarrow}\A^7\,$\, 
is given by the equations
\[
v^2 +(a_0 +a_2\,u^2)\,v+b_0 +b_4\,u^4+b_6\,u^6 = 0 
\]
and
\[
\uv^2 +(a_0\,\uu^3 +a_2\,\uu)\,\uv +b_0\,\uu^6+b_4\,\uu^2 +b_6 =0\,,
\]
respectively. Hence $Z$ is irreducible, smooth and of dimension $6$\,.
As $S$ is projective, the projection morphism $\pi$ is proper.
The fibration \,$\pi : Z\rightarrow\A^5$\, provides a
$5$-dimensional family of projective curves on the punctured cone
\,$S\setminus\{Q\}$\,. For each point $(a_0,a_2,b_0,b_4,b_6)$ of the base
\,$\A^5$\, the corresponding curve is given in the two charts
\,$W$\, and \,$\uW$\,
by the above equations. As the base $\A^5$ is smooth, as the total space
$Z$ is Cohen-Macaulay as a smooth variety, and as the dimension of each
fibre is equal to \,$\dim(Z)-\dim(\A^5)=1$\,, we conclude that the
morphism $\pi:Z\rightarrow\A^5$\, is flat (see
\cite[Theorem 18.16]{E}), and so the fibres have the same arithmetic genus.

The singular points of the fibres are obtained by applying the Jacobian
criterion to the curves in the two charts of the punctured cone. The types
of the singularities can be read off from the blowup sequences. Each curve
of the family has at the point
\[
\left(a_2^{3/2}:a_0^{1/2}a_2:a_0\,a_2^{1/2}:a_0^{3/2}:
(b_0\,a_2^6+b_4\,a_0^4\,a_2^2+b_6\,a_0^6)^{1/2}\right)
\]
a singularity with double tangent line. It is a cusp if and only if the
second factor of $\Delta$ is non-zero. If the second factor vanishes and
\,$a_2\neq 0$\,,
then it is a tacnode, i.e., a two-branched point of singularity degree two.
If \,$a_2 = 0\,,\ a_0\neq 0\,,\ b_6 = 0\, \text{ and }\, b_4\neq 0$\,, then
it is a ramphoid cusp, i.e., a unibranch point of singularity degree two.
If \,$a_2 = 0\,,\ a_0\neq 0\,,\ b_6 = 0\, \text{ and }\, b_4= 0$\,, then
it is a two-branched point of multiplicity three.

If \,$a_2\neq 0$\, and \,$b_6 =0$\,, then the curve has a second
singularity, namely a node at the point $(0:0:0:1:0)$ on the line
$L_\infty$\,, which collides with the first singularity if \,$a_2$\,
tends to zero. If $(a_0,a_2)\neq (0,0)$,  then there are no other
singularities on the fibre.
If $(a_0,a_2)=(0,0)$, then the fibre is a double smooth rational curve,
and so it is non-reduced and its points are singular.

If $(a_2,a_0)\neq (0,0)$, then the fibre is non-integral if and only if
\,$b_6 =0$\, and \,$a_0^2\,b_4 = a_2^2\,b_0$\,. In this case it is the
union of two smooth rational curves. If \,$a_2\neq 0$\,, then the two
components intersect at the two singular points
with the multiplicities $2$ and $1$\,. 
If \,$a_2=0$\,, then the two components intersect at the only singular
point %
with multiplicity $3$\,.

\medskip
\noindent
\textbf{Remark.}
If by homogenizing we enlarge the base of $\pi$ from $\A^5$ to $\PP^5$,
then the total space acquires singularities at 
\,$\{Q\}\times (\PP^5\setminus \A^5)$\,, and the fibres over
\,$\PP^5\setminus \A^5$\, pass through the vertices.
\begin{thm}\label{E2}
If \,$\f:T\rightarrow B$\, is a proper morphism of irreducible smooth
algebraic varieties such that almost all fibres are geometrically elliptic curves
of arithmetic genus $2$\,, then the fibration \,$\f:T\rightarrow B$\,
is, up to birational equivalence, a base extension of the fibration
\,$\pi:Z\rightarrow \A^5$\,.
\end{thm}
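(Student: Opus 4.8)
The plan is to pass to function fields, apply the normal form of Theorem~\ref{C1}, and recognize the resulting family as a base extension of $\pi$. Following Section~\ref{A}, write $K:=k(B)$ and $F:=k(T)$, so that the generic fibre of $\f$ is the regular complete model $\R_{F|K}$ and $F|K$ is a one-dimensional separable function field. After shrinking $B$ the morphism $\f$ is flat, so the arithmetic genus of the generic fibre equals that of almost every fibre, giving $g=2$; and since the geometric genus of almost every closed fibre equals the geometric genus $\og$ of the geometric generic fibre, the hypothesis that almost all fibres are geometrically elliptic gives $\og=1$. Hence $F|K$ is geometrically elliptic of genus two, and Theorem~\ref{C1} provides elements $a_0,a_2,b_0,b_4,b_6\in K$ with $\Delta\neq 0$ such that
\[
F=K(x,y),\qquad y^2+(a_2\,x^2+a_0)\,y+b_6\,x^6+b_4\,x^4+b_0=0 .
\]

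Next I would read these five coefficients as rational functions on $B$, defining a rational map $\sigma:B\to\A^5$ that is regular on a dense open $B_0\subseteq B$. Since $\Delta\neq 0$ in $K$, after shrinking $B_0$ its image lies in the open locus $\{\Delta\neq 0\}$ over which, by Theorem~\ref{E1}, the fibres of $\pi$ are geometrically elliptic. I then form the base extension $Z\times_{\A^5}B_0\to B_0$ along $\sigma$. By the very definition of $Z$, the fibre of this morphism over the generic point $\eta$ of $B_0$ is the fibre of $\pi$ over $\sigma(\eta)$ extended along $\kappa(\sigma(\eta))\hookrightarrow K$; in the chart $W$ it is the curve over $K$ cut out by $v^2+(a_0+a_2\,u^2)\,v+b_0+b_4\,u^4+b_6\,u^6=0$, with $a_0,a_2,b_0,b_4,b_6$ now the chosen elements of $K$. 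Under $u\mapsto x,\ v\mapsto y$ this is exactly the normal-form relation above, so the function field of the component of $Z\times_{\A^5}B_0$ dominating $B_0$ is $K(x,y)=F$.

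Both $T$ and $Z\times_{\A^5}B_0$ then share the function field $F$ over $K=k(B)$ (using that $B_0$ is birational to $B$), whence they are birationally equivalent over $B$, which is the assertion. The step needing care is this last function-field identification: one must verify that the extension of $\pi^{-1}(\sigma(\eta))$ along $\kappa(\sigma(\eta))\hookrightarrow K$ stays integral and reproduces $F$, and that one selects the unique component of the (possibly reducible) fibre product that dominates $B_0$. Both points follow from the absolute irreducibility of the normal-form polynomial---guaranteed by $\Delta\neq 0$ as in the proof of Theorem~\ref{C1}---together with the fact that $Z$ was built to carry the universal normal-form equation, so that base change reproduces that equation verbatim. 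The non-uniqueness of the normal form merely replaces $\sigma$ by another map yielding a birationally equivalent base extension.
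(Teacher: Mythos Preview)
Your proof is correct and follows essentially the same route as the paper: reduce to the function field $F|K$, invoke Theorem~\ref{C1} for the normal form, use the coefficients $(a_0,a_2,b_0,b_4,b_6)$ to define a rational map from $B$ to $\A^5$, form the base extension of $\pi$ along it, and identify function fields. The only cosmetic difference is that the paper factors through the image closure $\vB\subset\A^5$ and its preimage $\vT=\pi^{-1}(\vB)$ before base-changing to $B$, whereas you base-change $Z$ over $\A^5$ directly; since $\vT\times_{\vB}B = Z\times_{\A^5}B$, the two constructions coincide.
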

\begin{proof}
By Section~\ref{A}, the one-dimensional function field \,$k(T)|k(B)$\,
is geometrically elliptic of genus $2$\,, and so by Theorem~\ref{C1} it can
be put into the normal form
\[
k(T) = k(B)(x,y)\quad\text{where}\quad
y^2+(a_0+a_2\,x^2)y+b_0 +b_4\,x^4 +b_6\,x^6  = 0\,,
\]
$a_0, a_2, b_0, b_4, b_6 \in k(B)$\, and
\,$\Delta(a_0,a_2,b_0,b_4,b_6)\neq 0$\,.
Let $\vB$ be the closed irreducible affine subvariety of $\A^5$ with the
coordinate algebra \,$k[a_0,a_2,b_0,b_4,b_6]$\,, let
\[
\vT:=\pi^{-1}(\vB)\subseteq Z\subset S\times\A^5 
\]
and let \,$\vf:\vT\rightarrow\vB$\, be the corresponding closed
subfibration of \,$\pi:Z\rightarrow\A^5$\,. By Theorem~\ref{E1}
the morphism $\vf$ is proper and its fibres are non-smooth projective curves of
arithmetic genus $2$\,. As \,$\Delta\neq 0$\,, almost every fibre is
a geometrically elliptic curve with a cusp as its only singularity. By
construction \,$k(\vB)=k(a_0,a_2,b_0,b_4,b_6)\,,\,\ k(\vT)=k(\vB)(x,y)$\,
and therefore
\[
k(T)\,\cong\,k(\vT)\otimes_{k(\vB)} k(B)\,.
\]
By restricting the base $B$ of the fibration \,$\f:T\rightarrow B$\, to a
dense open subset, we can arrange that the rational functions
\,$a_2,\,a_0,\,b_6,\,b_4$\, and \,$b_0$\, become regular on $B$\,, and hence
define a dominant morphism \,$B\rightarrow\vB$\,. Thus we have the
fibre product \,$\vT\times_{\vB}B$\,, whose function field is isomorphic
to \,$k(\vT)\otimes_{k(\vB)} k(B)$\, and hence isomorphic to \,$k(T)$\,.
More precisely, the inclusion \,$k(\vT)\subseteq k(T)$\, induces a rational
map \,$T\dashrightarrow\vT$\, and hence a rational map of $B$-schemes
\,$T\dashrightarrow\vT\times_{\vB}B$\,, which is birational, because
the induced homomorphism between the function fields is the above isomorphism.
\end{proof}

To diminish the dimension of the base of the fibration $\pi$\,, according to
Corollary~\ref{C2} we divide the discussion into the cases \,$\oj \neq 0$\,
and \,$\oj=0$\,, normalize \,$a_2=1$\, and \,$a_0=1$\,, respectively, and
obtain as base varieties the affine spaces $\A^4$\, and \,$\A^3$\,,
respectively. By admitting separable quadratic base extensions if necessary,
we can further normalize \,$b_4=0$\, and \,$b_0=0$\,, respectively, and
diminish the dimensions of the bases by $1$\,.

In the first case we get the irreducible smooth fourfold
\begin{equation*} 
\begin{aligned}
X :=   \text{{\Large \{}} 
&
((u_0:u_1:u_2:u_3:v),(a_0,b_0,b_6))\in S\times\A^3\mid
\\  & \;\;\;
v^2+(a_0\,u_0 + u_2)v+b_0\,u_0^2 + b_6\,u_3^2\, =\, 0\,
\text{{\Large \}}}
\end{aligned}
\end{equation*}
equipped with the proper and flat projection morphism \,$\chi:X\rightarrow\A^3$\,.

If \,$\Delta:=b_6^2\,(b_0+a_0^3\,b_6+a_0^4\,b_6^2)\neq 0$\,,
then the fibre over the point $(a_0,b_0,b_6)$ is geometrically elliptic
with the modular invariant \,$\Delta^{-1}\neq 0$\,, and has a cusp as
its only singularity. This describes the generic behavior of the fibres of $\chi$.

By the discussion following Theorem~\ref{E1}, we also know the structure of the
bad fibres. If \,$b_6\neq0$\, and the second factor of $\Delta$ vanishes,
then the fibre is a rational curve with a tacnode as its only singularity.
If \,$b_6=0$\, and \,$b_0\neq0$\,, then the fibre is a rational curve with
a cusp and a node as its only singularities.
In the remaining case where \,$b_0=b_6=0$\,, the fibre is a union of two
smooth rational curves, which meet in the two singular points with intersection
multiplicities two and one.

In the second case where $\oj=0$\,, we get the irreducible smooth threefold
\[ 
Y :=   \text{{\Large \{}} 
((u_0:u_1:u_2:u_3:v),(b_4,b_6))\in S\times\A^2\mid
v^2+u_0\,v+b_4\,u_2^2 + b_6\,u_3^2\, =\, 0\,
\text{{\Large \}}}
\]
and the proper and flat projection morphism \,$\eta:Z\rightarrow\A^2$\,.
If \,$b_6\neq0$\,, then the fibre over the point $(b_4,b_6)$ is geometrically
elliptic with the modular invariant $0$\,, and has a cusp as its only
singularity.

The bad fibres are described as follows:
If \,$b_6=0$\, and \,$b_4\neq0$\,, then the fibre is a rational curve with a
ramphoid cusp as its only singularity.
If \,$b_4=b_6=0$\,, then the fibre is a union of two smooth
rational curves, which meet in only one point with intersection
multiplicity three.
\begin{cor}
\label{E3}
Let \,$\f:T\rightarrow B$\, be a proper morphism of irreducible smooth
varieties such that almost all fibres are geometrically elliptic curves of
arithmetic genus two. If the modular invariants of the fibres are not
identically zero (respectively, equal to zero), then the fibration
\,$\f:T\rightarrow B$\,, after an eventual separable quadratic
base extension, is birational equivalent to a base extension of the
fibration \,$\chi:X\rightarrow\A^3$\, (respectively,
\,$\eta:Y\rightarrow\A^2)$.
\end{cor}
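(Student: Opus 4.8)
The plan is to mimic the argument of Theorem~\ref{E2} almost verbatim, replacing the general normal form of Theorem~\ref{C1} by the two refined normal forms provided by Corollary~\ref{C2} and the normalizations carried out in the paragraphs immediately following Theorem~\ref{E2}. By Section~\ref{A} the one-dimensional function field \,$k(T)|k(B)$\, is geometrically elliptic of genus two, hence by Corollary~\ref{C2} its modular invariant \,$\oj$\, satisfies either \,$\oj\neq 0$\, (cases $(ii)$ and $(iii)$) or \,$\oj=0$\, (case $(i)$). The hypothesis that the modular invariants of the fibres are not identically zero (respectively, identically zero) is precisely the condition that distinguishes these alternatives for the generic fibre, since by Theorem~\ref{E1} the modular invariant of a fibre is \,$a_2^6/\Delta^{1/2}$\,, which vanishes exactly when \,$a_2=0$\,.

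In the case \,$\oj\neq 0$\, I would first invoke the normalization \,$a_2=1$\, together with the eventual separable quadratic base extension that permits setting \,$b_4=0$\,, as explained right after Theorem~\ref{E2}. After this base extension, the function field \,$k(T)|k(B)$\, can be written in the normal form
\[
y^2+(a_0+x^2)\,y+b_0+b_6\,x^6 = 0
\]
with \,$a_0,b_0,b_6\in k(B)$\, and \,$\Delta=b_6^2\,(b_0+a_0^3\,b_6+a_0^4\,b_6^2)\neq 0$\,. This is exactly the defining equation of the fibres of \,$\chi:X\rightarrow\A^3$\, in the chart $W$. Letting \,$\vB\subseteq\A^3$\, be the closed irreducible subvariety with coordinate algebra \,$k[a_0,b_0,b_6]$\, and setting \,$\vT:=\chi^{-1}(\vB)$\,, the coordinate functions \,$a_0,b_0,b_6$\, become regular after restricting $B$ to a dense open subset, defining a dominant morphism \,$B\rightarrow\vB$\,. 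One then forms the fibre product \,$\vT\times_{\vB}B$ and verifies that the inclusion \,$k(\vT)\subseteq k(T)$\, induces a birational map of $B$-schemes \,$T\dashrightarrow\vT\times_{\vB}B$\,, precisely as in the proof of Theorem~\ref{E2}. The case \,$\oj=0$\, is handled identically, using instead the normalization \,$a_0=1$\, and the quadratic base extension setting \,$b_0=0$\,, which yields the normal form \,$y^2+y+b_4\,x^4+b_6\,x^6=0$\, matching the fibres of \,$\eta:Y\rightarrow\A^2$\,.

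The main subtlety, and the only place where genuine care is required beyond transcribing the Theorem~\ref{E2} argument, is the \emph{separable quadratic base extension}: normalizing \,$b_4=0$\, (respectively \,$b_0=0$\,) generally requires adjoining a square root to the base field \,$k(B)$\,, which geometrically corresponds to passing to a degree-two separable cover \,$B_1\rightarrow B$\,. I would point out that since the extension is separable the genus is preserved (as recalled in Section~\ref{A}), so that the fibration over $B_1$ still has geometrically elliptic fibres of arithmetic genus two, and the conclusion of Theorem~\ref{C1} continues to apply. One must also confirm that the transformation formulae following Theorem~\ref{C1} indeed allow this normalization over a separable quadratic extension without affecting the invariant \,$\oj$\,, so that the value of the modular invariant, and hence the dichotomy $\oj\neq 0$ versus $\oj=0$, is unchanged. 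Granting this, the birational equivalence over $B_1$ with a base extension of \,$\chi$\, (respectively \,$\eta$\,) follows exactly as before, completing the proof.
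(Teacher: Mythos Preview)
Your proposal is correct and follows exactly the approach the paper intends: the paper does not give a separate proof of Corollary~\ref{E3}, but rather states it as an immediate consequence of the preceding discussion (the normalizations $a_2=1$, $b_4=0$ and $a_0=1$, $b_0=0$ described just after Theorem~\ref{E2}) combined with the fibre-product argument from the proof of Theorem~\ref{E2}. Your identification of the separable quadratic extension as the Artin--Schreier step needed to kill $b_4$ (respectively $b_0$) via the transformation formulae is precisely the point the paper leaves implicit.
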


>From Section~\ref{D} we obtain by similar arguments the following result.
\begin{thm}
\label{E4}
The algebraic variety
\begin{equation*} 
\begin{aligned}
V :=   \text{{\Large \{}} 
&
((u_0:u_1:u_2:u_3:v),(b_0,\dots,b_4,b_6))\in S\times\A^6\mid
\\  & \;\;\;
v^2+b_0\,u_0^2+b_1\,u_0\,u_1+b_2\,u_1^2+b_3\,u_1\,u_2+b_4\,u_2^2+u_2\,u_3+b_6\,u_3^2\,=\,0\,
\text{{\Large \}}}
\end{aligned}
\end{equation*}
is an irreducible smooth sevenfold. The projection morphism
\,$\mu:V\rightarrow\A^6$\, is proper and flat, and its fibres are rational
curves of arithmetic genus two. If \,$b_3\neq0$\, (respectively, \,$b_3=0$), then
the fibre over the point \,$(b_0,\dots,b_4,b_6)\in\A^6(k)$\, has two cusps
(respectively, a ramphoid cusp).

If \,$\f:T\rightarrow B$\, is a proper morphism of irreducible non-smooth
varieties such that almost all fibres are rational curves of arithmetic genus
two, then the fibration \,$\f:T\rightarrow B$\, is, up to birational equivalence,
a base extension of the fibration
\,$\mu:V\rightarrow\A^6$\,.
\end{thm}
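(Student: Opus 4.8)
The plan is to follow the two-step pattern of Theorems~\ref{E1} and \ref{E2}, replacing the geometrically elliptic discussion of Section~\ref{C} by the inseparable one of Section~\ref{D}. For the first assertion I would begin by observing that the defining equation reduces to $v^2=0$ whenever $u_0=u_1=u_2=u_3=0$, which excludes the vertex $Q=(0:0:0:0:1)$; hence $V$ lies in the smooth open set $(S\setminus\{Q\})\times\A^6=(W\cup\uW)\times\A^6$. In the chart $W\times\A^6\cong\A^8$ with coordinates $(u,v,b_0,b_1,b_2,b_3,b_4,b_6)$ the equation reads
\[
v^2+b_6\,u^6+u^5+b_4\,u^4+b_3\,u^3+b_2\,u^2+b_1\,u+b_0=0,
\]
and in $\uW\times\A^6$ it reads
\[
\uv^2+b_0\,\uu^6+b_1\,\uu^5+b_2\,\uu^4+b_3\,\uu^3+b_4\,\uu^2+\uu+b_6=0.
\]
Since the first equation is linear in $b_0$ and the second is linear in $b_6$, each chart of $V$ is the graph of a regular function over an affine space $\A^7$, and as the two charts overlap, $V$ is an irreducible smooth sevenfold. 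As $S$ is complete, $\mu$ is proper; and as the base $\A^6$ is smooth, the total space $V$ is Cohen--Macaulay and every fibre is one-dimensional, so $\mu$ is flat by \cite[Theorem 18.16]{E}.

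Next I would identify the fibres. For each parameter point the fibre is exactly a curve of the type studied in Section~\ref{B} with $a(x)=0$, so by the Hilbert-polynomial computation there its arithmetic genus is $2$; moreover, since $b_5=1$ the polynomial $b(u)=\sum_{i=0}^6 b_i u^i$ has a term of odd degree and is therefore not a square in $k[u]$, so every fibre is integral. To locate the singularities I would apply the Jacobian criterion in both charts: because $\partial/\partial v$ and $\partial/\partial\uv$ vanish identically in characteristic two, the singular points are precisely the zeros of $b'(u)=u^4+b_3\,u^2+b_1$, all of which lie in $W$ (the point $\uu=0$ is smooth, as $\partial/\partial\uu=1$ there). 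At such a zero $u_0$, absorbing the even part of $b(u_0+\tilde u)-b(u_0)$ into the coordinate $v$ brings the local branch into the form $V^2=c_3\,\tilde u^3+c_5\,\tilde u^5+\cdots$, whose governing odd Hasse derivatives are $c_3=b_3$ and $c_5=b_5=1$. Hence when $b_3\neq0$ the polynomial $b'$ has two distinct double roots over $k$, each giving an ordinary cusp of singularity degree one, whereas when $b_3=0$ it has the single quadruple root $u_0=b_1^{1/4}$, giving a ramphoid cusp of singularity degree two. In either case the singularity degrees $\delta_P$ sum to $2$, so Hironaka's genus formula yields $\og=p_a-\sum_P\delta_P=2-2=0$ and the fibres are rational curves, as claimed.

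For the classification statement I would argue exactly as in the proof of Theorem~\ref{E2}. By Section~\ref{A} the function field $k(T)\mid k(B)$ is geometrically rational of arithmetic genus $2$, so by Theorem~\ref{D1} it is of inseparable type and, after the substitutions of Section~\ref{D} (which are defined over $k(B)$ and need no base extension), can be written as $k(T)=k(B)(x,y)$ with $y^2=\sum_{i=0}^6 b_i x^i$, where $b_5=1$ and $b_0,\dots,b_4,b_6\in k(B)$. Letting $\vB\subseteq\A^6$ be the closed irreducible subvariety with coordinate algebra $k[b_0,b_1,b_2,b_3,b_4,b_6]$ and setting $\vT:=\mu^{-1}(\vB)$, the closed subfibration $\vf:\vT\rightarrow\vB$ of $\mu$ is proper with fibres rational curves of arithmetic genus two and with $k(\vT)=k(\vB)(x,y)$, whence $k(T)\cong k(\vT)\otimes_{k(\vB)}k(B)$. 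Restricting $B$ to a dense open subset over which $b_0,\dots,b_4,b_6$ become regular, one obtains a dominant morphism $B\rightarrow\vB$ and a fibre product $\vT\times_{\vB}B$ whose function field is this tensor product; the inclusion $k(\vT)\subseteq k(T)$ then induces a birational map $T\dashrightarrow\vT\times_{\vB}B$ of $B$-schemes.

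I expect the only genuinely delicate point to be the local analysis separating the two cusps from the ramphoid cusp. It becomes clean once one notes that the even-degree part of $b(u_0+\tilde u)-b(u_0)$ is a perfect square over $k$ and can be eliminated, so that the singularity type is controlled solely by the odd Hasse derivatives $c_3=b_3$ and $c_5=1$; the normalization $b_5=1$ is exactly what forces $c_5\neq0$, and hence produces the ramphoid cusp when $b_3=0$. Everything else is a routine transcription of the arguments already carried out for the geometrically elliptic fibration $\pi:Z\rightarrow\A^5$.
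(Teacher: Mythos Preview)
Your proposal is correct and follows exactly the route the paper intends: the paper gives no explicit proof of Theorem~\ref{E4} beyond the sentence ``From Section~\ref{D} we obtain by similar arguments the following result,'' and your two-step argument---transcribing the smoothness/flatness/arithmetic-genus verification of Theorem~\ref{E1} and then the base-extension argument of Theorem~\ref{E2}, with Theorem~\ref{D1} in place of Theorem~\ref{C1}---is precisely what ``similar arguments'' means here.

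One small remark on presentation: your local singularity analysis via the odd Hasse derivatives $D^3 b=b_3$ and $D^5 b=b_5=1$ is a clean alternative to the blowup sequences alluded to after Theorem~\ref{E1}; it makes transparent why the normalization $b_5=1$ is exactly what forces the ramphoid cusp when $b_3=0$, and it avoids any case distinction beyond $b_3=0$ versus $b_3\neq 0$. The paper's implicit approach (blowups, as in the discussion following Theorem~\ref{E1}) would reach the same conclusion but with slightly more bookkeeping.
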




\begin{thebibliography}{Sa2}




\bibitem[BS]{BS}  H.\ Bedoya, K.-O.  St\"ohr,
\textit{An algorithm to calculate discrete invariants of singular primes
in function fields,} J. Number Theory \textbf{27} (1987) 310--323.

\bibitem[BM]{BM}
E. Bombieri, D. Mumford, \textit{Enriques' classification of surfaces in
characteristic} $p$. \textrm{III}, Invent. Math. \textbf{35} (1976) 197--232.

\bibitem[Bo]{Bo}  H.\ Borges Neto, \textit{Mudan\c{c}a de g\^enero e
classifica\c{c}\~ao de corpos de g\^enero 2,}
Tese de Doutorado, IMPA, Rio de Janeiro (1979).

\bibitem[Ch]{Ch} C.\ Chevalley, \textit{Introduction to the theory algebraic functions
of one variable}, Math.\ Surveys, No.\ \textrm{IV}, Amer.\ Math.\ Soc., New York (1951).

\bibitem[E]{E} D.\ Eisenbud,
\textit{Commutative algebra with a view toward algebraic geometry},
Graduate Texts in Mathematics \textbf{150} (1995), Springer-Verlag. 


\bibitem[Ha]{Ha} R. Hartshorne, \textit{Algebraic Geometry}, Springer-Verlag, New York (1977).




\bibitem[Hi]{Hi} H.\ Hironaka, \textit{On the arithmetic genera and the effective
genera of algebraic curves}, Mem.\ Coll.\ Sci.\ Univ.\ Ser.\ A Math.\ 
\textbf{30} (1957) 177--195.




\bibitem[L]{L}  Q. Liu, \textit{Algebraic geometry and arithmetic curves,}
Oxford University Press, New York (2006).

\bibitem[M]{M}  T. Matsusaka,
\textit{The Theorem of Bertini on linear systems in modular fields,}
Mem. Coll. Sci. Univ. Kyoto Ser. A. Math. \textbf{26} (1951) 51--62.


\bibitem[Q]{Q} C. S. Queen, \textit{Non-conservative function field of
genus one.} \textrm{I}, Arch. Math. \textbf{22} (1971) 612--623.

\bibitem[RS]{RS}  R. Rosa, K.-O. St\"ohr, \textit{Trigonal Gorenstein curves,}
J. Pure Appl. Algebra \textbf{174} (2002) 187--205.

\bibitem[Ro]{Ro} M. Rosenlicht, \textit{Equivalence relations on
algebraic curves,} Ann. of Math (2) \textbf{56} (1952) 169--191.


\bibitem[Sa1]{Sa1} R. Salom\~ao,
\textit{Fibrations by nonsmooth genus three curves in characteristic three}, 
J. Pure Appl. Algebra \textbf{215} (2011) 1967--1979.

\bibitem [Sa2]{Sa2} R. Salom\~ao, \textit{Fibrations by curves with more than one
nonsmooth point}, Bull.\ Braz.\ Math.\ Soc. \textbf{45} (2014) 267--292.

\bibitem [Sc]{Sc} S.\ Schr\"oer, \textit{Singularities appearing on generic
fibers of morphisms between smooth schemes}, Michigan Math.\ J. \textbf{56}
(2008) 55--76.


\bibitem[Sh]{Sh}  I. R. Shafarevich, \textit{Basic algebraic geometry 1},
Springer-Verlag, Berlin (1994).



\bibitem[Sti]{Sti}  H. Stichtenoth,
\textit{Zur Konservativit\"at algebraischer Funktionenk\"orper,}
J. Reine Angew. Math. \textbf{301} (1978) 30--45 


\bibitem[St1]{St1} K.-O. St\"ohr, \textit{On singular primes in function fields,}
Arch. Math. \textbf{50} (1988) 156--163.


\bibitem[St2]{St2} K.-O. St\"ohr, \textit{Hyperelliptic Gorenstein curves,}
J. Pure Appl. Algebra \textbf{135} (1999) 93--105.

\bibitem[St3]{St3} K.-O. St\"ohr, \textit{On Bertini's theorem in
characteristic $p$ for families of canonical curves in $\PP^{(p-3)/2}$,}
Proc. London Math. Soc. (3) \textbf{89} (2004) 291--316.

\bibitem[St4]{St4} K.-O. St\"ohr, \textit{On Bertini's theorem for
fibrations by plane projective quartic curves in characteristic five,}
J.\ Algebra \textbf{315} (2007), 502--526.


\bibitem[T1]{T1} J. Tate, \textit{Genus change in inseparable extensions of
function fields,} Proc. Amer. Math. Soc. \textbf{3} (1952) 400--406. 

\bibitem[T2]{T2} J. Tate, \textit{The arithmetic of elliptic curves,}
Invent. Math. \textbf{23} (1974) 179--206. 

\bibitem[Z1]{Z1} O. Zariski, \textit{The theorem of Bertini on the
variable singular points of a linear system of varieties,}
Trans. Amer. Math. Soc. \textbf{56} (1944) 130--140.

\bibitem[Z2]{Z2} O. Zariski, \textit{The concept of a simple point of an abstract
algebraic variety},
Trans. Amer. Math. Soc. \textbf{62} (1947) 1--52.



\end{thebibliography}
\end{document}